\documentclass[11.05pt,a4paper]{amsart}
\usepackage{rotating}
\usepackage[all]{xy}

\usepackage[utf8]{inputenc}
\usepackage{amscd,amssymb,amsopn,amsmath,amsthm,graphics,amsfonts,enumerate,verbatim,calc}
\usepackage{mathtools}

\usepackage{hyperref}
\usepackage[capitalise]{cleveref}

\usepackage{setspace}
\onehalfspacing

\setlength{\textwidth}{16.30cm}
\setlength{\hoffset}{-1.5cm}

\usepackage{mathpazo}
\usepackage{color}
\usepackage{latexsym}
\usepackage{amsthm,amsfonts,amssymb,mathrsfs}
\usepackage{rotating}
\usepackage[leqno]{amsmath}
\usepackage{xspace}
\usepackage[all]{xy}
\usepackage{longtable}
\textwidth=16.cm \textheight=23 cm \topmargin=0.00cm
\oddsidemargin=1.00cm \evensidemargin=1.00cm \headheight=20.9pt
\headsep=5 mm \numberwithin{equation}{section}
\hyphenation{semi-stable} \emergencystretch=10pt

\setlength{\textwidth}{16.30cm}
\setlength{\hoffset}{-1.5cm}

\usepackage{mathpazo}
\usepackage{color}
\usepackage{latexsym}
\usepackage{amsthm,amsfonts,amssymb,mathrsfs}
\usepackage{rotating}
\usepackage[leqno]{amsmath}
\usepackage{xspace}
\usepackage[all]{xy}
\usepackage{longtable}
\textwidth=16.cm \textheight=23 cm \topmargin=0.00cm
\oddsidemargin=1.00cm \evensidemargin=1.00cm \headheight=20.9pt
\headsep=5 mm \numberwithin{equation}{section}
\hyphenation{semi-stable} \emergencystretch=10pt
\newcommand{\up}[1]{{{}^{#1}\!}}

\newtheorem{theorem}{Theorem}[section]
\newtheorem{lemma}[theorem]{Lemma}
\newtheorem{notation}[theorem]{Notation}
\newtheorem{proposition}[theorem]{Proposition}
\newtheorem{corollary}[theorem]{Corollary}

\newtheorem{problem}[theorem]{Problem}
\theoremstyle{definition}

\theoremstyle{remark}
\newtheorem{remark}[theorem]{Remark}
\newtheorem{fact}[theorem]{Fact}
\newtheorem{example}[theorem]{Example}
\newtheorem{observation}[theorem]{Observation}
\newtheorem{discussion}[theorem]{Discussion}
\newtheorem{question}[theorem]{Question}

\newcommand{\F}{\operatorname{F}}
\newcommand{\Ass}{\operatorname{Ass}}
\newcommand{\im}{\operatorname{im}}

\newcommand{\PID}{\operatorname{PID}}

\newcommand{\Tr}{\operatorname{Tr}}

\newcommand{\pd}{\operatorname{pd}}

\newcommand{\ord}{\operatorname{ord}}

\newcommand{\nil}{\operatorname{nil}}

\newcommand{\red}{\operatorname{red}}

\newcommand{\Ext}{\operatorname{Ext}}

\newcommand{\tr}{\operatorname{tr}}

\newcommand{\Tor}{\operatorname{Tor}}

\newcommand{\Hom}{\operatorname{Hom}}

\newcommand{\Ann}{\operatorname{Ann}}

\newcommand{\DVR}{\operatorname{DVR}}

\newcommand{\depth}{\operatorname{depth}}

\newcommand{\coker}{\operatorname{coker}}

\newcommand{\lo}{\longrightarrow}
\newcommand{\fm}{\frak{m}}
\newcommand{\fp}{\frak{p}}

\newcommand{\fn}{\frak{n}}

\begin{document}

\author[]{Mohsen Asgharzadeh}

\address{}
\email{mohsenasgharzadeh@gmail.com}

\title[rings of dimension one ]
{Integral closure of 1-dimensional rings}

\subjclass[2010]{ Primary 13G05; 13B22; 13A35.} 
\keywords{absolute integral closure; characteristic p methods; conductor ideal; integral closure; order; multiplicity; reflexivity; rings of dimenstion one}

\begin{abstract} 
	We study certain properties of modules over 1-dimensional local integral domains. First, we examine the order of the conductor ideal and its expected relationship with multiplicity. Next, we investigate the reflexivity of certain colength-two ideals. Finally, we consider the freeness problem of the absolute integral closure of a $\DVR$, and connect this to the  reflexivity problem of
	$R^{\frac{1}{p^n}}$. 
\end{abstract}

\maketitle
\tableofcontents

\section{Introduction}
Let $(R, \mathfrak{m}, k)$ be a commutative Noetherian local ring, and let $\overline{R}$ denote the normalization of $R$ in its total ring of fractions. The \textit{conductor} of $R$ is defined as  
$
\mathfrak{C}_R := \Ann_R\left(\frac{\overline{R}}{R}\right)
$. 
For an ideal $I$ of $R$, the \textit{order} of $I$ with respect to $\mathfrak{m}$ is given by  
$
\ord(I) := \sup \{ n \in \mathbb{Z}_{\geq 0} \mid I \subseteq \mathfrak{m}^n \}.
$ Here, $e_\mathfrak{m}(R)$ denotes the Hilbert-Samuel \textit{multiplicity} of $R$, and $\mu(\mathfrak{m})$ represents the minimal number of generators of $\mathfrak{m}$.
The following questions, originally posed in \cite[Questions 5.4 + 5.6]{dey}, explore the behavior of the conductor ideal in this context.

\begin{question}\label{q}
	\begin{enumerate}
		\item[(i)]  Let $R = k[[H]]$ be a numerical semigroup ring with $e_\fm(R) - \mu(\fm) = 1$. Then, is $\ord(\mathfrak{C}) = 2$?
		\item[(ii)] Let $R$ be a complete local hypersurface domain of dimension 1. Then, is $\ord(\mathfrak{C}) = e_\fm(R) - 1$?
	\end{enumerate}
\end{question}

Regarding Question \ref{q}(i), Dey and Dutta proved that it holds when $R = k[[t^4, t^a, t^b]]$, where $4 < a$ (even) $< b$ (odd).  
We show that Question \ref{q}(i) does not hold in the (non-)almost Gorenstein case. As an example, we present:

\begin{example}\label{inex}
	Let $R := k[[t^4, t^5, t^7]]$. Then $e_\fm(R) - \mu(\fm) = 1$, but $\ord(\mathfrak{C}) = 1$.
\end{example}
We extend Example \ref{inex} to a more general setting. Additionally, following several advances in commuting conductors, we provide a more detailed analysis of Question \ref{q}. Namely, suppose that $(R,\fm)$ is a complete domain of positive dimension with $\mathfrak{C} + xR = \fm$ for some $x \in \fm$ (not necessarily 1-dimensional). We show that $\ord(\mathfrak{C}) = 1$.  The latter motivates us to explore whether Question \ref{q}(i) remains valid over any hypersurface domain. Moreover, it holds over Gorenstein rings with a normal maximal ideal, as shown by Ooishi \cite{Oo}. We further demonstrate that Question \ref{q}(i) is valid over Gorenstein rings with non-node curve singularities. The examples from Section 2 motivate the search for 
	determining the set
$
	\big\{a \geq 7 : \ord(\mathfrak{C}_{k[[t^4,t^5,t^a]]}) = 1\big\}.
$

Regarding Question \ref{q}(ii), it was established in \cite{or}  that it holds over Gorenstein domains with 1-step normalization. We recover this result. Additionally, we prove its validity in the following new cases:

\begin{observation}
	\begin{enumerate}
		\item[(i)] Suppose $R$ is a localization of an affine hypersurface over a field of characteristic zero and of dimension 1. If $\mathfrak{C}= \fm^n$, then $\ord(\mathfrak{C}) = e_\fm(R) - 1$.
		\item[(ii)] Let $R$ be a complete local hypersurface domain over $\mathbb{C}$ with $e(R) \leq 3$ and dimension 1. Then $\ord(\mathfrak{C}) = e_\fm(R) - 1$.
		\item[(iii)] Let $(R,\fm)$ be a 1-dimensional essentially affine hypersurface over a field of characteristic zero. Then $\ord(\mathfrak{C}) \leq e_\fm(R) - 1$.
	\end{enumerate}    
\end{observation}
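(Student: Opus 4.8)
The plan is to pass to the completion and read everything off the value semigroup of the normalization, then to treat the three statements separately. Write $(\overline{R},v)$ for the normalization; when $R$ is analytically irreducible (automatic in (ii), since a complete hypersurface domain over $\mathbb{C}$ is a plane branch $\mathbb{C}[[x,y]]/(f)$ with $f$ irreducible) the normalization is a DVR $k[[t]]$ and the value semigroup $\Gamma=v(R\setminus\{0\})\subseteq\mathbb{Z}_{\ge 0}$ is numerical. The three invariants translate as follows: the multiplicity $e_\fm(R)=e$ is the smallest positive element of $\Gamma$; the conductor is $\mathfrak{C}=\{x:v(x)\ge c\}$, where $c$ is the conductor of $\Gamma$; and, since $\fm$ is a monomial ideal, $t^{\delta}\in\fm^{n}$ iff $\delta$ is a sum of $n$ positive elements of $\Gamma$. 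Hence $\ord(\mathfrak{C})$ is the largest $n$ for which every $\delta\ge c$ is a sum of $n$ positive elements of $\Gamma$. I would prove (iii) first, as it is the engine for (ii); (i) will be handled separately by a Hilbert-function computation that uses neither the semigroup nor characteristic $0$ in an essential way.

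For (iii) it suffices to produce one element of $\mathfrak{C}$ outside $\fm^{e}$, i.e.\ one $\delta\ge c$ that is not a sum of $e$ positive elements of $\Gamma$. The candidate is the top of the Apéry set $w^{\ast}:=F+e=c+e-1$, where $F=c-1$ is the Frobenius number; note $w^{\ast}\in\Gamma$, $w^{\ast}\ge c$, and $w^{\ast}$ is the unique largest element of $\operatorname{Ap}(\Gamma,e)$, which has exactly $e$ elements. Define the height $q(\gamma)=(\gamma-w_{\gamma})/e$, where $w_{\gamma}$ is the smallest element of $\Gamma$ congruent to $\gamma$ modulo $e$; a one-line check shows $q$ is superadditive, $q(a+b)\ge q(a)+q(b)$, with equality iff $w_{a}+w_{b}\in\operatorname{Ap}(\Gamma,e)$. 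If $w^{\ast}=\gamma_{1}+\dots+\gamma_{e}$ with all $\gamma_{i}>0$, then since $q(w^{\ast})=0$ and all $q(\gamma_{i})\ge 0$, superadditivity forces every partial sum $T_{j}=\gamma_{1}+\dots+\gamma_{j}$ to lie in $\operatorname{Ap}(\Gamma,e)$. But then $0=T_{0}<T_{1}<\dots<T_{e}=w^{\ast}$ are $e+1$ distinct Apéry elements, contradicting $\#\operatorname{Ap}(\Gamma,e)=e$. This pigeonhole is the combinatorial heart of the whole Observation and the main obstacle; the only remaining point in (iii) is the reduction of a general essentially affine hypersurface to this setting via completion (and, if the completion fails to be a domain, replacing $\Gamma$ by the value semigroup in $\mathbb{N}^{r}$ of the several branches and running the analogous Apéry/pigeonhole count).

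For (ii) the hypothesis $e\le 3$ makes the branch semigroup two-generated: the multiplicity $e$ is the first generator $\beta_{0}$ of the characteristic sequence, and since $e\in\{2,3\}$ is $2$ or prime the chain of gcd's $e=e_{0}>e_{1}>\dots>e_{g}=1$ forces $g=1$, so $\Gamma=\langle e,b\rangle$ with $\gcd(e,b)=1$ and $c=(e-1)(b-1)$. By (iii) we already have $\ord(\mathfrak{C})\le e-1$, so it remains to show $\mathfrak{C}\subseteq\fm^{\,e-1}$, i.e.\ that every $\delta\ge c$ is a sum of $e-1$ positive elements. For $e=2$ this says every $\delta\ge c$ lies in $\Gamma$, which is the definition of $c$. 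For $e=3$ it says every $\delta\ge c$ is a sum of two positive elements; the elements of $\Gamma$ that are \emph{not} such sums are exactly the minimal generators $\{e,b\}$, whose maximum $b$ satisfies $b<2b-2=c$. Thus no obstruction sits at or above $c$, giving $\mathfrak{C}\subseteq\fm^{2}$ and $\ord(\mathfrak{C})=2=e-1$.

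For (i) I would avoid the semigroup and argue with Hilbert functions, so the argument survives even when the completion is not a domain. Since $R$ is a one-dimensional hypersurface, $\operatorname{gr}_{\fm}(R)=\kappa[X,Y]/(f^{\ast})$ with $\deg f^{\ast}=e$, whence $\dim_{\kappa}\fm^{j}/\fm^{j+1}=\min(j+1,e)$ and $\ell(R/\fm^{n})=\binom{n+1}{2}$ whenever $n\le e$. Write $\delta=\ell(\overline{R}/R)$, and recall $\ell(\overline{R}/\fm\,\overline{R})=e$. Because $\mathfrak{C}$ is an $\overline{R}$-ideal it satisfies $\fm^{n}\overline{R}=\fm^{n}$ when $\fm^{n}=\mathfrak{C}$, so the sequence $0\to R/\fm^{n}\to\overline{R}/\fm^{n}\overline{R}\to\overline{R}/R\to 0$ gives $\ell(R/\fm^{n})=ne-\delta$; and Gorensteinness of the hypersurface $R$ gives $\ell(R/\mathfrak{C})=\ell(\overline{R}/R)=\delta$. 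Combining, $\delta=\ell(R/\fm^{n})=ne-\delta$, so $2\delta=ne$ and $\ell(R/\fm^{n})=ne/2$. Equating with the graded count $\binom{n+1}{2}$ yields $n(n+1)=ne$, i.e.\ $n=e-1$; and the alternative $n>e$ is excluded since feeding $\ell(R/\fm^{n})=\binom{e+1}{2}+(n-e)e$ into $2\delta=ne$ again returns $n=e-1$, a contradiction. As $\ord(\mathfrak{C})=\ord(\fm^{n})=n$, this is exactly $\ord(\mathfrak{C})=e_{\fm}(R)-1$. The only delicate inputs are the Gorenstein identity $\ell(R/\mathfrak{C})=\ell(\overline{R}/R)$ and the contraction formula, both standard for one-dimensional Gorenstein rings.
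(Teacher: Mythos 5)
Your part (i) is correct, and it is essentially the paper's own route: the paper proves it by combining Proposition \ref{in} (the upper bound) with Proposition \ref{power}, whose proof is the same length computation you give (hypersurface Hilbert function via Matlis, plus the Gorenstein conductor duality $\ell(R/\mathfrak{C})=\ell(\overline{R}/R)$). Your version is in fact slightly cleaner, since your two-case analysis of $n\le e$ versus $n>e$ disposes of the alternative directly, removing the auxiliary hypothesis $\ord(\mathfrak{C})\le e_\fm(R)-1$ that Proposition \ref{power} has to import. Likewise, your Ap\'ery pigeonhole --- $w^\ast=c+e-1$ lies in $\operatorname{Ap}(\Gamma,e)$, and a decomposition into $e$ positive summands would force $e+1$ distinct elements into the $e$-element Ap\'ery set --- is correct combinatorics, and for honest numerical semigroup rings it gives a characteristic-free proof of $\ord(\mathfrak{C})\le e_\fm(R)-1$, a genuinely different idea from the paper's.

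The gap is the dictionary you set up at the outset: ``since $\fm$ is a monomial ideal, $t^{\delta}\in\fm^{n}$ iff $\delta$ is a sum of $n$ positive elements of $\Gamma$'' holds only when $R$ is literally the semigroup ring $k[[\Gamma]]$, and neither a complete hypersurface domain over $\mathbb{C}$ nor the completion of an essentially affine hypersurface has that form in general. What is automatic is only one inclusion, $S_n:=\{\gamma_1+\dots+\gamma_n:\gamma_i\in\Gamma\setminus\{0\}\}\subseteq v(\fm^n)$; cancellation among products can create strictly larger values. Concretely, for the plane branch $R=\mathbb{C}[[t^4,t^6+t^7]]$ (so $e=4$, $\Gamma=\langle 4,6,13\rangle$) one has $y^2-x^3=2t^{13}+t^{14}\in\fm^2$ although $13\notin S_2$; thus $v(\fm^2)\supsetneq S_2$, and your criterion ``$\ord(\mathfrak{C})$ is the largest $n$ for which every $\delta\ge c$ is a sum of $n$ positive elements of $\Gamma$'' is false for such rings. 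Your proof of (iii) needs $w^\ast\notin v(\fm^e)$, but the pigeonhole only yields $w^\ast\notin S_e$, so the argument collapses precisely in the generality (iii) requires; it is rescuable when $\Gamma$ is two-generated, where $v(\fm^n)=S_n$ can be proved by comparing the Hilbert function of the branch with that of its monomial model and counting gaps via $\ell(R/\fm^n)=\#(\Gamma\setminus v(\fm^n))$ --- but that lemma, which your (ii) also silently uses (even its lower bound $\mathfrak{C}\subseteq\fm^{e-1}$ needs a successive-approximation argument, since $\mathfrak{C}$ consists of arbitrary series of order $\ge c$, not monomials), is nowhere supplied. Two symptoms confirm the hole: your (iii) never invokes characteristic zero, whereas the paper's Proposition \ref{in} uses it essentially (Noether's theorem places the Jacobian ideal inside $\mathfrak{C}$, and Euler's identity produces a partial derivative of order exactly $e-1$), so the missing bridge is exactly where the hypothesis must enter; and the multibranch case, where the completion is not a domain, is waved off with ``the analogous Ap\'ery/pigeonhole count'' in $\mathbb{N}^r$, which is not routine and is not carried out. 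By contrast, the paper's proof of (ii) (Proposition \ref{e3}) sidesteps all of this by classifying the possible singularities and reducing to the actual monomial rings $\mathbb{C}[[t^2,t^n]]$, $\mathbb{C}[[t^3,t^4]]$, $\mathbb{C}[[t^3,t^5]]$, where your dictionary would be legitimate.
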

The following question was asked in \cite[Question 7.17]{dao}:
\begin{question}\label{q1}
	Let $R$ be a Cohen-Macaulay local ring of dimension one. When is an ideal of colength two a trace ideal? When is it reflexive?
\end{question}
Regarding Question \ref{q1}, we present an ideal of colength two in a one-dimensional domain that is not reflexive, while all ideals of colength one are reflexive. We also observe:

\begin{observation}
	Let $(R,\fm)$ be a 1-dimensional complete domain of minimal multiplicity. The following hold:
\begin{enumerate}
\item[(i)] Suppose $R$ is almost Gorenstein, but not a hypersurface. Then the sequence
$0 \to \omega_R \to \omega_R^{\ast\ast} \to k \to 0
$ 	is exact, and in particular, $\omega_R^{\ast\ast} \cong \fm$.
\item[(ii)] Suppose any colength-two ideal is reflexive and trace. Then $R$ is a hypersurface.
\item[(iii)] There exists an almost Gorenstein domain of minimal multiplicity, but not far-flung Gorenstein, where every colength-two trace ideal is reflexive.
\item[(iv)] Suppose there exists a self-dual ideal of colength two that is $q$-torsionless with $q > 2$. Then $R$ is a hypersurface.
	\end{enumerate} 
\end{observation}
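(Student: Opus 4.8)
Throughout write $Q$ for the fraction field, realize $\omega_R$ as a fractional ideal, and set $(-)^{\ast}=\Hom_R(-,R)$, so that for a fractional ideal $J$ one has $J^{\ast}=(R:_Q J)$, $J^{\ast\ast}=(R:_Q(R:_Q J))$, and $J\hookrightarrow J^{\ast\ast}$ is the inclusion. Since $R$ is a complete one-dimensional domain, $\overline R$ is a DVR. Minimal multiplicity means $\fm^2=x\fm$ for a minimal reduction $x$; as $\ell(R/xR)=e_\fm(R)$ and $\overline\fm^2=0$ in $R/xR$, the socle of $R/xR$ is $\fm/xR$, so the type equals $e_\fm(R)-1$. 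Hence ``not a hypersurface'' forces $e_\fm(R)=\mu(\fm)\ge 3$ and $R$ non-Gorenstein. For (i) I would use that the almost Gorenstein property gives an embedding $R\hookrightarrow\omega_R$ with $\fm(\omega_R/R)=0$, i.e.\ $\fm\omega_R\subseteq R$ and $\dim_k(\omega_R/R)=\text{type}-1=e_\fm(R)-2$. Then I claim $\omega_R^{\ast}=\fm$ exactly: $\fm\subseteq(R:\omega_R)\subseteq R$ because $\fm\omega_R\subseteq R$ and $1\in\omega_R$, while $1\notin(R:\omega_R)$ since $\omega_R\supsetneq R$, so maximality of $\fm$ gives equality. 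Thus $\omega_R^{\ast\ast}=(R:\fm)$, and from $\fm^2=x\fm$ one gets $x^{-1}\fm\subseteq(R:\fm)$, with the reverse inclusion following because $\fm\not\subseteq xR$ (else $\fm$ would be principal); hence $(R:\fm)=x^{-1}\fm\cong\fm$, which is the ``in particular'' statement. Finally $\dim_k(x^{-1}\fm/R)=\ell(\fm/xR)=e_\fm(R)-1$, and subtracting the value $e_\fm(R)-2$ for $\omega_R/R$ yields $\dim_k(\omega_R^{\ast\ast}/\omega_R)=1$, giving the exact sequence.

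For (ii), every colength-two ideal $I$ contains $\fm^2=x\fm$. I would take any colength-two $I$ with $x\in I$; these exist since $\ell(R/xR)=e_\fm(R)\ge 3$. If some $q\in(R:I)$ had $qx$ a unit, then $I\subseteq xR$, contradicting $\ell(R/I)=2<e_\fm(R)=\ell(R/xR)$; hence $(R:I)\subseteq x^{-1}\fm=(R:\fm)$, so $(R:I)=(R:\fm)$ and $I^{\ast\ast}=(R:(R:\fm))=\fm\supsetneq I$. Thus $I$ is not reflexive, so the hypothesis already forces $e_\fm(R)=2$, i.e.\ a hypersurface; note the trace hypothesis is not even needed here.

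For (iii), I would exhibit the explicit ring $R=k[[t^3,t^4,t^5]]$: it has minimal multiplicity and type $2$, is almost Gorenstein since $\langle 3,4,5\rangle$ is almost symmetric, is non-Gorenstein, and fails to be far-flung Gorenstein by a direct check of the defining condition. Colength-two ideals correspond to hyperplanes $H\subseteq\fm/\fm^2=k\{t^3,t^4,t^5\}$, which I would classify by the omitted functional. Those $I$ with $t^3\in I$ are non-reflexive by the argument of (ii), and a short computation comparing $(R:I)$ with $(I:I)=\operatorname{End}(I)$ shows $(R:I)\supsetneq(I:I)$ for each of them, so they are not trace ideals; checking the remaining finitely many orbits shows that the trace colength-two ideals are exactly the reflexive ones, proving the claim for this example.

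For (iv), I would first translate the hypotheses homologically. By Auslander--Bridger, $q$-torsionless with $q>2$ means $I$ is reflexive and $\Ext^j(I^{\ast},R)=0$ for $1\le j\le q-2$; self-duality $I\cong I^{\ast}$ transfers this to $\Ext^1(I,R)=0$, and from $0\to I\to R\to R/I\to 0$ one gets $\Ext^2(R/I,R)\cong\Ext^1(I,R)=0$. If $x\in I$, then (ii) shows $I$ is non-reflexive unless $e_\fm(R)=2$, so I am reduced to $x\notin I$, where $R/I\cong k[\epsilon]/(\epsilon^2)$ is Gorenstein Artinian with socle generated by the image of $x$. Feeding $0\to k\to R/I\to k\to 0$ into $\Hom(-,R)$ shows that $\Ext^2(R/I,R)=0$ forces the connecting map $\Ext^1(k,R)\to\Ext^2(k,R)$, i.e.\ cup product with the extension class, to be surjective. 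The main obstacle is exactly here: I must show that for a minimal-multiplicity, hence Golod, non-Gorenstein ring this cup product can never surject onto the nonzero module $\Ext^2(k,R)$ --- equivalently that such rings are $G$-regular away from the hypersurface case --- so that $\Ext^2(R/I,R)=0$ is impossible unless $R$ is Gorenstein; with minimal multiplicity this gives $e_\fm(R)=2$ and $R$ is a hypersurface. I expect this Bass-number/Golod analysis to be the crux of the entire Observation.
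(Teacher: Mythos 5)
Your proofs of (i) and (ii) are correct but take a genuinely different, more elementary route than the paper. For (i), the paper invokes Kobayashi's exact sequence $0\to\omega_R\to\fm\to k\to 0$ from \cite{kob}, dualizes twice, and runs a snake-lemma chase on the biduality maps $\delta_{\omega_R},\delta_\fm$ to identify $\coker(\delta_{\omega_R})$ with $k$ and kill the $k^n$ term; you instead normalize $R\subseteq\omega_R$ via the almost Gorenstein sequence, compute $\omega_R^{\ast}=(R:\omega_R)=\fm$ and $\omega_R^{\ast\ast}=(R:\fm)=x^{-1}\fm$ directly, and finish by the length count $(e-1)-(e-2)=1$. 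This is valid (granting the standard facts $\ell(\omega_R/R)=r(R)-1$ and $r(R)=e_\fm(R)-1$ under minimal multiplicity) and yields $\omega_R^{\ast\ast}\cong\fm$ with less bookkeeping. For (ii) your argument is actually stronger than the paper's: the paper's proof (Observation 4.3) realizes $\omega_R$ as a colength-two ideal and uses \emph{both} hypotheses --- trace to get $\omega_R^{\ast\ast}\cong R$, reflexivity for the contradiction --- whereas you exhibit, whenever $e_\fm(R)\geq 3$, an explicit colength-two ideal $I$ with $xR\subseteq I$, $(R:I)=(R:\fm)=x^{-1}\fm$, hence $I^{\ast\ast}=\fm\supsetneq I$; so reflexivity alone suffices, and the trace (and almost Gorenstein) hypotheses are not needed. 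One minor omission shared with the paper: the reduction $x$ with $\fm^2=x\fm$ needs $k$ infinite (the paper says ``we may assume $k$ is infinite''; you do not address it).

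Part (iii) is wrong as written. The ring $R=k[[t^3,t^4,t^5]]$ that you propose \emph{is} far-flung Gorenstein: this is exactly the case $a=3$, $d=1$ of Lemma 2.2 (from \cite{tiny}), and the paper's own Example 4.2(ii) records $\mathfrak{C}=\fm=\tr(\omega_R)$ for this ring. So it cannot witness ``almost Gorenstein, minimal multiplicity, but \emph{not} far-flung Gorenstein.'' The paper instead takes $R=k[[t^4,t^6,t^7,t^9]]$: almost Gorenstein since $(\fm:\fm)=k[[t^2,t^3]]$ is Gorenstein, of minimal multiplicity as $\mu(\fm)=4=e_\fm(R)$, nearly but not far-flung Gorenstein since $\tr(\omega_R)=\fm\neq\mathfrak{C}$ with $\ell(R/\mathfrak{C})=2$; by \cite[6.3]{dao} the only trace ideals are $\{\mathfrak{C},\fm,R\}$, and $\mathfrak{C}$ is reflexive because $\mathfrak{C}^{\ast}=\overline{R}$ and $\mathfrak{C}^{\ast\ast}=\overline{R}^{\ast}=\mathfrak{C}$. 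Your proposed orbit-by-orbit classification of colength-two ideals is in any case only sketched, not executed.

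Part (iv) contains a genuine gap, which you yourself flag. Your homological translation is fine: $q$-torsionless with $q>2$ plus self-duality gives $\Ext^1_R(I,R)=0$, hence $\Ext^2_R(R/I,R)=0$, and the case $x\in I$ is disposed of by your (ii). But the crux --- that for a non-Gorenstein ring of minimal multiplicity the connecting map $\Ext^1_R(k,R)\to\Ext^2_R(k,R)$ can never be surjective --- is left unproven, and no Golod/Bass-number argument is supplied. The paper closes this by a completely different mechanism: by \cite[Proposition 3.10]{dao} the ideal $I$ is principal or integrally closed; in the integrally closed case, self-duality, the presentation $(\dagger)$ of $\Tr(I)$, \cite[Lemma 3.5(2)]{ta} and Matlis duality convert $\Ext^q_R(\Tr(I),R)=0$ into $\Tor_{q-1}^R(R/I,R/\omega_R)=0$, and then \cite[3.3]{corso} (integrally closed $\fm$-primary ideals annihilating homology) gives $\pd_R(R/\omega_R)\leq q-2<\infty$, so $\omega_R$ is free by Auslander--Buchsbaum, $R$ is Gorenstein, and minimal multiplicity forces a hypersurface. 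Until your Bass-number claim is proved (or replaced by this integral-closure route), your (iv) --- and the Corollary on totally reflexive ideals that depends on it --- is not established.
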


This leads to the following conclusion:

\begin{corollary}
	Let $(R,\fm)$ be a 1-dimensional domain of minimal multiplicity. Suppose there exists a totally reflexive ideal  of colength two. Then $R$ is a hypersurface.
\end{corollary}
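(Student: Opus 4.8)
The plan is to use the totally reflexive ideal as a device that forces $R$ to be Gorenstein, and then to bring in the minimal-multiplicity hypothesis to pin the multiplicity down to $2$. Write $I$ for the given totally reflexive ideal of colength two. Being totally reflexive, $I$ has Gorenstein dimension zero, $\operatorname{G-dim} I = 0$. Consider the short exact sequence $0 \to I \to R \to R/I \to 0$. Since both $I$ and $R$ have Gorenstein dimension zero, the behaviour of Gorenstein dimension along short exact sequences gives $\operatorname{G-dim}(R/I) \le \max\{\operatorname{G-dim} R,\ \operatorname{G-dim} I + 1\} = 1$. As $I$ has colength two, $R/I$ is a \emph{nonzero module of finite length with finite Gorenstein dimension}; reading the long exact Ext-sequence off the same short exact sequence, one moreover finds $\operatorname{Ext}^i_R(R/I,R)=0$ for $i\ge 2$ while $\operatorname{Ext}^1_R(R/I,R)\neq 0$, so $R/I$ has G-dimension exactly one.

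The key input is the following principle: if a local ring admits a nonzero module of finite length and finite Gorenstein dimension, then the ring is Gorenstein. Applying this to $R/I$ shows that $R$ is Gorenstein. I expect this to be the main obstacle of the argument, in that it is the one genuinely external ingredient: it is the vanishing of $\operatorname{Ext}^i_R(R/I,R)$ beyond the grade of $R/I$ that encodes the Gorenstein property, and this is precisely what the totally reflexive hypothesis on $I$ supplies.

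It then remains to combine Gorensteinness with minimal multiplicity. A one-dimensional Cohen--Macaulay local ring of minimal multiplicity has Cohen--Macaulay type equal to $e(R)-1$; since $R$ is now Gorenstein its type is $1$, forcing $e(R)=2$. A one-dimensional Cohen--Macaulay local domain of multiplicity two is a hypersurface: concretely, minimal multiplicity gives $\mu(\fm)=e(R)=2$, and a one-dimensional domain of embedding dimension two is cut out by a single equation in a two-dimensional regular local ring (after passing to the completion, where the ambient ring is a power series ring and hence a $\UFD$, so the height-one prime defining $R$ is principal). Hence $R$ is a hypersurface, as claimed.

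Alternatively, one could route the argument through the earlier Observation rather than through the Gorenstein characterization: a totally reflexive module is reflexive and $n$-torsionless for every $n$, so $I$ is in particular $q$-torsionless for any $q>2$, and if one also verifies that $I$ is self-dual then part (iv) of the Observation applies directly to yield the conclusion. I would nonetheless present the G-dimension route as the main line, since the passage ``finite length plus finite Gorenstein dimension implies Gorenstein'' is the most economical mechanism for converting the totally reflexive hypothesis into the structural statement, and it sidesteps the need to establish self-duality of $I$ by hand.
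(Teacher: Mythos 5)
Your main route collapses at precisely the step you flag as the key external ingredient: the principle ``a local ring admitting a nonzero finite-length module of finite Gorenstein dimension is Gorenstein'' is false. Over \emph{any} one-dimensional Cohen--Macaulay local ring, $R/xR$ for a parameter $x$ is a nonzero module of finite length with $\pd_R(R/xR)=1$, hence of finite G-dimension; for instance $R=k[[t^3,t^4,t^5]]$ is a domain of minimal multiplicity carrying such modules, yet it is not Gorenstein. (The Auslander--Bridger criterion requires the \emph{residue field} $k$ to have finite G-dimension; an arbitrary finite-length test module never suffices, just as a finite-length module of finite projective dimension does not force regularity.) There is a tell-tale internal symptom: your argument uses the colength-two hypothesis only to know that $R/I\neq 0$. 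Since every nonzero principal ideal $xR\cong R$ of a domain is totally reflexive, your mechanism would ``prove'' that every $1$-dimensional domain of minimal multiplicity is a hypersurface, which the example above refutes. Colength two is in fact essential to the paper's proof: by \cite[Proposition 3.10]{dao} such an ideal is either principal (in which case $\ell(R/xR)=2$ forces $\mu(\fm)=2$ and one concludes directly, as in Proposition \ref{hyp2}) or integrally closed, and integral closedness is exactly the hypothesis under which Tor-vanishing detects finite projective dimension via \cite[3.3]{corso}.

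Your alternative route is the correct one, and your worry about self-duality pinpoints exactly what the paper's proof circumvents: one never verifies $I\cong I^{\ast}$. Instead, the paper uses that $I^{\ast}$ is totally reflexive along with $I$, and that $I\cong I^{\ast\ast}$. Total reflexivity of $I^{\ast}$ gives $\Ext_R^{q}(\Tr(I^{\ast}),R)=0$ for all $q>0$, and the computation of Proposition \ref{hyp2}, run with $\Tr(I^{\ast})$ in place of $\Tr(I)$ and using $(\dagger)$ of Notation \ref{para20240911c}, yields $\Tor_{q-3}^R(I,\omega_R)=\Tor_{q-3}^R(I^{\ast\ast},\omega_R)\cong\Tor_{q-1}^R(\Tr(I^{\ast}),\omega_R)\cong\Ext_R^q(\Tr(I^{\ast}),R)^{\vee}=0$ for $q>3$, whence $\Tor_{q-2}^R(R/I,\omega_R)=0$; since $I$ is (without loss of generality) integrally closed and $\fm$-primary, \cite[3.3]{corso} gives $\pd_R(\omega_R)<\infty$, so $\omega_R$ is free by Auslander--Buchsbaum and $R$ is Gorenstein. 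Your closing step is sound: Gorenstein plus minimal multiplicity forces $\mu(\fm)=2$ (type $=e_\fm(R)-1$ in the non-regular case, or the socle argument modulo a reduction $x$ with $\fm^2=x\fm$), and a one-dimensional complete domain of embedding dimension two is a hypersurface. So the repair is to abandon the G-dimension shortcut entirely and substitute the $I^{\ast}$/bidual bookkeeping above for the self-duality you could not supply.
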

The absolute integral closure of an integral domain \( R \), denoted by \( R^+ \). The following questions were raised in \cite[Question 3.19]{p} and its subsequent paragraph:  

\begin{question}\label{q4}  
	Let \( R \) be a (regular) complete local domain of positive characteristic.  
 What is the projective dimension of \( R^+ \) as an \( R \)-module?  
Is it zero?  
\end{question}  

In order to have a motivation for the freeness problem, recall that an excellent local domain \( R \) of positive characteristic is regular if and only if \( R^+ \) is flat over \( R \). In particular, if \( R \) is regular, then \( R^+ \) is flat over \( R \). However, since \( R^+ \) is not of finite type, the standard techniques from commutative algebra do not immediately establish that \( R^+ \) is free.  
In the final section, we construct a 1-dimensional regular ring \( R \) for which \( \pd_R(R^+) = \dim R > 0 \)\footnote{It may be wonder if we look at \cite[Ex. 8.5]{Hut} which claims $R^{+gr}$ is free if $R$ is standard graded integral domain over a filed of prime characteristic.}. However, the above ring is not complete. We show if the above conclusion  is failed for its completion, then $R^{\frac{1}{p^n}}$ is not reflexive.
This is the subject of the second part of Section 5, where we deal with the reflexivity problem of
 $R^{\frac{1}{p^n}}$ from \cite[Question 7.19]{dao}.

\section{When is order  of conductor  2?}

In this section we study the following question:
\begin{question}
Let	$R=k[[H]]$ be a numerical semigroup ring with $e_\fm(R)-\mu(\fm)=1$. Then is $\ord(\mathfrak{C})=2$?
\end{question}
The \emph{trace} of an $R$-module $M$, denoted $\tr_R(M)$, is defined to be
 $
\tr_R(M)=\sum_{f\in M^*}\im(f),
$ with the convenience that $M^\ast:=\Hom_R(M,R)$. Recall from \cite{tiny} that
a ring $R$ is \emph{far-flung Gorenstein} provided 
$
\mathfrak{C} = \operatorname{tr}(\omega_R),
$
where $\omega_R$ is the canonical module.

\begin{lemma}\label{t}
(See \cite[Corollary 6.3]{tiny}). Let $a\geq  3$ and d be coprime nonnegative integers and $H = \langle a,a+d, \cdots, a +(a -1)d\rangle$. Then $R := k[[H]]$ is a far-flung Gorenstein ring if and only if $d =1$.
\end{lemma}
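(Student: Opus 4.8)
The plan is to translate everything into the combinatorics of the numerical semigroup $H$ and to reduce the far-flung Gorenstein condition to a single inequality about the initial degree of the inverse of the canonical ideal. Write $\overline{R}=k[[t]]$ and realize the canonical module as the monomial fractional ideal $\omega_R=\langle t^z : z\in\Omega\rangle$, where $\Omega=\{z\in\mathbb{Z}:\, F-z\notin H\}$ is the standard canonical semigroup-ideal normalized so that $\min\Omega=0$ and $\Omega\subseteq\mathbb{Z}_{\geq 0}$; here $F$ is the Frobenius number. Since $\tr_R(\omega_R)=\omega_R\cdot\Hom_R(\omega_R,R)=\omega_R\cdot\omega_R^{-1}$ and all ideals in sight are monomial, I would record $\tr_R(\omega_R)$ through the sumset $\Omega+\Omega^{-1}$, where $\Omega^{-1}=\{y\in\mathbb{Z}:\, y+\Omega\subseteq H\}$. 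Because $\Omega\subseteq\mathbb{Z}_{\geq 0}$ we have $\omega_R\subseteq\overline{R}$, hence $\mathfrak{C}=(R:\overline{R})\subseteq\omega_R^{-1}$, and as $0\in\Omega$ also $\omega_R^{-1}\subseteq\tr_R(\omega_R)$; thus $\mathfrak{C}\subseteq\tr_R(\omega_R)$ always, with $\Omega^{-1}\subseteq H$.

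Writing $c$ for the conductor number (so $\mathfrak{C}=\langle t^n:n\geq c\rangle$ as a monomial ideal, and $c\in\Omega^{-1}$), the containment above shows that $R$ is far-flung Gorenstein, i.e. $\tr_R(\omega_R)=\mathfrak{C}$, if and only if $\min(\Omega+\Omega^{-1})\geq c$, i.e. if and only if $\min\Omega^{-1}=c$ (the value $c$ being an upper bound for $\min\Omega^{-1}$). This is the criterion I would prove the lemma from. Next I would compute the combinatorial data of $H=\langle a,a+d,\dots,a+(a-1)d\rangle$. Using $\gcd(a,d)=1$, the Apéry set with respect to $a$ is $\{0\}\cup\{a+id:1\leq i\leq a-1\}$, so the multiplicity is $a$, the Frobenius number is $F=\max\mathrm{Ap}-a=(a-1)d$, and $c=F+1=(a-1)d+1$. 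In particular $c=a$ precisely when $d=1$, while $c>a$ when $d\geq 2$. I would also use the normal form $wa+sd\in H$ for every $w\geq 1$ and $0\leq s\leq w(a-1)$, which follows by distributing $w$ among the indices $0,\dots,a-1$.

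The crux is the claim $a\in\Omega^{-1}$, i.e. $a+z\in H$ for all $z\in\Omega$; after the substitution $n=a+z$ and using $F-z=M-n$ with $M:=a+F=\max\mathrm{Ap}$, this is equivalent to the statement that $M-n\in H$ for every $n\geq a$ with $n\notin H$. For such an $n$ its residue $r=n\bmod a$ is nonzero, so $n=a+i_rd-ja$ for the Apéry index $i_r\in[1,a-1]$ and some $j\geq 1$; then $M-n=ja+(a-1-i_r)d$ with $0\leq a-1-i_r\leq a-2\leq j(a-1)$, whence $M-n\in H$ by the normal form. This establishes $a\in\Omega^{-1}$ for every $d$, so $\min\Omega^{-1}\leq a$.

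Finally I would separate cases. For $d\geq 2$ we get $\min\Omega^{-1}\leq a<c$, so $R$ is not far-flung Gorenstein; for $d=1$ one has $c=a$, and since $\Omega^{-1}\subseteq H$ meets $(0,a)$ nowhere while $0\notin\Omega^{-1}$ (because $1\in\Omega$, as $F-1=a-2$ is a gap, yet $1\notin H$ for $a\geq 3$), we obtain $\min\Omega^{-1}=a=c$, so $R$ is far-flung Gorenstein. The main obstacle is the crux identity $M-n\in H$; everything else is bookkeeping with the Apéry set, and the identity itself reduces to the elementary normal-form description of $H$.
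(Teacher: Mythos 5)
Your proof is correct, but note that the paper does not actually prove this lemma: it is quoted verbatim from Herzog--Kumashiro--Stamate \cite[Corollary 6.3]{tiny}, so you have supplied a self-contained argument where the paper defers entirely to the literature. I checked the details and they hold up: the realization $\omega_R=\langle t^z: z\in\Omega\rangle$ with $\Omega=\{z: F-z\notin H\}$, $\min\Omega=0$, is the standard (J\"ager) canonical ideal; since $k[[H]]$ is complete, monomial fractional ideals are exactly the sets of series supported on semigroup ideals, so $\tr(\omega_R)=\omega_R\cdot(R:\omega_R)$ is the monomial ideal with value set $\Omega+\Omega^{-1}$, and your criterion ``far-flung Gorenstein $\iff \min\Omega^{-1}=c$'' follows correctly from the two containments $\mathfrak{C}\subseteq\omega_R^{-1}\subseteq\tr(\omega_R)$ together with $c\in\Omega^{-1}$. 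The Ap\'ery computation $\operatorname{Ap}(H,a)=\{0\}\cup\{a+id:1\le i\le a-1\}$ is right (the coprimality $\gcd(a,d)=1$ is exactly what makes $id\notin H$ for $1\le i\le a-1$ and the residues distinct), giving $F=(a-1)d$ and $c=(a-1)d+1$; and the crux $a\in\Omega^{-1}$, i.e.\ $n\notin H$, $n\equiv a+i_rd\pmod a$ forces $M-n=ja+(a-1-i_r)d\in H$ with $j\ge 1$, is a valid use of the normal form $\{wa+sd: 0\le s\le w(a-1)\}$. The hypothesis $a\ge 3$ enters exactly where you use it, namely $1\in\Omega$ (equivalently $a-2$ is a gap), which rules out the symmetric case $a=2$, $d=1$, where $0\in\Omega^{-1}$ and the criterion would fail. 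Compared with the cited source: Herzog--Kumashiro--Stamate obtain the corollary from their structural computation of $\tr(\omega_R)$ for semigroups generated by arithmetic sequences, developed through their general theory of the canonical trace; your route isolates a single clean inequality ($\min\Omega^{-1}$ versus $c$) and settles it by elementary Ap\'ery-set bookkeeping, which is more self-contained and makes transparent that the only obstruction for $d\ge 2$ is the universal element $a\in\Omega^{-1}$, at the price of not yielding the finer information (the full trace ideal) that the machinery of \cite{tiny} provides.
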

Recall from \cite{HHS} that
a ring $R$ is \emph{nearly Gorenstein} provided 
$
\fm\subseteq\operatorname{tr}(\omega_R).
$
Also,   
 $R$ is called \emph{almost Gorenstein} if there exists an exact sequence
 $0 \to R \to\omega_R\to C \to 0$ with
$e_{\fm}(C)=\mu(C)
$. For more details, 
 see \cite{Gt}. Also, we cite \cite{ab} as a reference  monograph  on totally reflexive modules.
\begin{lemma}\label{ko}
	(See \cite[Corollary 3.7]{kob}). Let $R = k[[t^4, t^5, t^7]]$. Then $R$ is an almost Gorenstein local ring of dimension
	one. Also, $R$ is nearly Gorenstein and any totally reflexive module is free.
\end{lemma}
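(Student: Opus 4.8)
The ring $R=k[[t^4,t^5,t^7]]$ is the semigroup ring of $H=\langle 4,5,7\rangle$, so it is automatically a one-dimensional complete Noetherian local domain; the statement is therefore a matter of extracting structure from the combinatorics of $H$. First I would record the basic data: the gaps of $H$ are $\{1,2,3,6\}$, the Frobenius number is $F=6$, the conductor exponent is $7$, and the pseudo-Frobenius set is $\mathrm{PF}(H)=\{3,6\}$, so $R$ has Cohen--Macaulay type $2$ and in particular is \emph{not} Gorenstein. The normalized canonical ideal $\omega$, whose exponent set is $\{z\in\mathbb{Z}: F-z\notin H\}=\{0,3,4,5,7,8,\dots\}$, equals the fractional ideal $(1,t^3)$, a two-generated ideal consistent with type $2$.

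For the almost Gorenstein property I would use the inclusion $R\subseteq\omega$ of fractional ideals: its cokernel $C=\omega/R$ is spanned by the class of $t^3$ and is annihilated by $\fm$ (since $t^4t^3,\,t^5t^3,\,t^7t^3\in R$), so $C\cong k$; this is exactly the Ulrich condition $e_\fm(C)=\mu(C)$ in dimension one, and $0\to R\to\omega\to C\to 0$ exhibits $R$ as almost Gorenstein (equivalently, $\mathrm{PF}(H)$ is symmetric about $F/2$, i.e. $H$ is almost symmetric). For the nearly Gorenstein property I would compute the trace $\tr(\omega)=\omega\cdot\omega^{-1}$. A direct check of the condition $j+\omega\subseteq H$ shows $\omega^{-1}=(R:\omega)=\fm$, and the resulting sumset has exponent set $\{4,5,7,8,\dots\}$, so $\tr(\omega)=\fm$. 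In particular $\fm\subseteq\tr(\omega)$, hence $R$ is nearly Gorenstein; since $\tr(\omega)\ne R$ it is not Gorenstein, and since $\tr(\omega)=\fm\ne\mathfrak{C}$ it is not far-flung Gorenstein, consistent with the fact that $H$ is not an arithmetic sequence in the sense of \Cref{t}.

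The substantive claim is that every totally reflexive module is free, i.e. that $R$ is G-regular, and my plan is a reduction to radical-cube-zero. A totally reflexive $R$-module $M$ is maximal Cohen--Macaulay, hence torsion-free over the domain $R$, so the nonzerodivisor $x=t^4\in\fm\setminus\fm^2$ is regular on $M$; by standard change of rings $M/xM$ is totally reflexive over $\bar R=R/t^4R$ and $\pd_R M=\pd_{\bar R}(M/xM)$. A short computation identifies $\bar R$ as the Artinian local ring with $k$-basis $1,\bar t^5,\bar t^7,\bar t^{10}$, Hilbert function $(1,2,1)$, socle $\langle \bar t^7,\bar t^{10}\rangle$ of dimension $2$, and $\bar\fm^3=0$. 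Inspecting annihilators shows every nonzero $a\in\bar\fm$ has $\Ann(a)$ equal to either the whole maximal ideal $\bar\fm$ or the two-dimensional socle, neither of which is principal; hence $\bar R$ admits no exact pair of zero divisors. By Yoshino's classification of totally reflexive modules over non-Gorenstein Artinian rings with radical cube zero, the absence of exact zero divisors forces $\bar R$ to be G-regular, so $M/xM$ is $\bar R$-free and therefore $\pd_R M=0$, i.e. $M$ is free.

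The main obstacle is precisely this last point: excluding non-free totally reflexive modules over the non-Gorenstein ring $R$. Reducing modulo $x=t^4$ is what makes it tractable, converting the question into one over the radical-cube-zero ring $\bar R$, where the dichotomy ``G-regular versus exact zero divisors'' is available and the verification becomes a finite, explicit annihilator computation. The delicate points to check are that the regular element is chosen in $\fm\setminus\fm^2$ so that reduction preserves minimal numbers of generators, and that $\bar\fm^3=0$ genuinely holds in the quotient; both are special to the choice $x=t^4$ and to the small multiplicity $e_\fm(R)=4$, and it is exactly these features that let the reduction succeed.
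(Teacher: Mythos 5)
The paper does not actually prove this lemma: it is imported wholesale, the parenthetical ``(See \cite[Corollary 3.7]{kob})'' being the entire proof, so your self-contained argument is by construction a different route. I have checked your computations and they are correct: the gaps of $H=\langle 4,5,7\rangle$ are $\{1,2,3,6\}$, $\operatorname{PF}(H)=\{3,6\}$ (so type $2$, not Gorenstein, and $H$ is almost symmetric), $\omega=(1,t^3)$ with $\fm\cdot t^3\subseteq R$ gives the exact sequence $0\to R\to\omega\to k\to 0$ witnessing almost Gorensteinness, and $(R:\omega)=\fm$, $\tr(\omega)=\omega\cdot(R:\omega)=\fm$ gives nearly Gorenstein. (Your incidental observation that $\tr(\omega)=\fm\neq\mathfrak{C}=t^7\overline{R}$, so $R$ is \emph{not} far-flung Gorenstein, is also correct and worth keeping: $\langle 4,5,7\rangle$ is not the arithmetic semigroup $\langle 4,5,6,7\rangle$ to which Lemma \ref{t} applies, a point on which the paper's own Example following this lemma is not careful.) The reduction machinery is likewise sound: a totally reflexive module is torsion-free over the domain $R$, $t^4$ is regular on it, total reflexivity descends to $\overline{R}=R/t^4R$, and $\pd_{\overline{R}}(M/t^4M)=\pd_R(M)$ is the standard change-of-rings equality; your description of $\overline{R}$ (basis $1,t^5,t^7,t^{10}$, Hilbert function $(1,2,1)$, $\overline{\fm}^3=0$, two-dimensional socle $\langle t^7,t^{10}\rangle$) and your annihilator computation (every nonzero $a\in\overline{\fm}$ has annihilator either $\operatorname{Soc}(\overline{R})$ or $\overline{\fm}$, neither cyclic) are all verifiably correct. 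Compared with the paper, your route buys an explicit, checkable proof; the citation to Kobayashi buys brevity and embeds $R$ in his general theory of rings with self-dual maximal ideal.

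The one step I would not let stand as written is the final appeal to ``Yoshino's classification.'' Yoshino's theorem on non-Gorenstein rings with $\fm^3=0$ gives \emph{necessary conditions} for the existence of a nonfree totally reflexive module; the dichotomy you invoke --- no exact zero divisors implies G-regular --- is not in Yoshino's paper, and the versions of it in the later Brauer--Thrall literature carry hypotheses (notably algebraically closed residue field) that your arbitrary $k$ need not satisfy. Fortunately the repair is already contained in your computation, in two independent ways. First, among Yoshino's actual necessary conditions is $\operatorname{Soc}(\overline{R})=\overline{\fm}^2$, and you exhibited the socle element $t^7\notin\overline{\fm}^2$; this alone forces $\overline{R}$ to be G-regular over any field. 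Second, $\overline{R}\cong k[x,y]/(x^2,xy,y^3)$ has embedding dimension two and is not a complete intersection, hence is Golod by Scheja's theorem, and over a Golod local ring that is not a hypersurface every module of finite G-dimension has finite projective dimension (Avramov--Martsinkovsky), so totally reflexive implies free. In fact this last theorem applies to $R$ itself --- a one-dimensional Cohen--Macaulay ring of codimension two that is not a complete intersection is Golod --- so with that citation in hand your entire reduction modulo $t^4$ can be skipped. With the citation corrected in any of these ways, your proof is complete.
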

We cite \cite{ab} to see  definition and basic properties of  totally reflexive modules. Now, we are ready to solve the first question:

\begin{example}
	Let $R:=k[[t^4,t^{5},t^{7}]]$. Then $e_\fm(R)-\mu(\fm)=1$ but $\ord(\mathfrak{C})=1$.
\end{example}

\begin{proof}
We know from \cite[Ex. 11.9]{HS} that  
$
e_\mathfrak{m}(R) = \min\{4,5,7\}
$
and also that $\mu(\mathfrak{m}) = 3$. Thus,  
$
e_\mathfrak{m}(R) - \mu(\mathfrak{m}) = 1.
$
Recall  that  
\[
H = \langle a, a + d, \dots, a + (a - 1)d \rangle
\]
with $a := 4$ and $d = 1$.  
Thanks to Lemma \ref{t}, the ring $R$ is  far-flung Gorenstein. Hence,   
$
\mathfrak{C} = \operatorname{tr}(\omega_R) 
$.  
In view of Lemma \ref{ko}, $R$ is nearly Gorenstein. Then, we have  
$
\mathfrak{m} \subseteq \operatorname{tr}(\omega_R).
$
However, $R$ is  non-regular  but totally reflexive module is free. It follows that $R$ is not Gorenstein, and consequently,  
$
\operatorname{tr}(\omega_R) \neq R.
$
Therefore,  
$
\mathfrak{m} = \operatorname{tr}(\omega_R) = \mathfrak{C}.
$
By definition,  
$
\operatorname{ord}(\mathfrak{C}) = 1.
$
\end{proof}

Here, is an example which is not almost Gorenstein:
\begin{example}
	Let $R:=k[[t^6,t^{8},t^{11},t^{13},t^{15}]]$. Then $e_\fm(R)-\mu(\fm)=1$ but $\ord(\mathfrak{C})=1$. 
\end{example}

\begin{proof}We know  $e_\fm(R)=6$ and also $\mu(\fm)=5$. Hence, $e_\fm(R)-\mu(\fm)=1$. Recall that $t:=\frac{t^6\times t^6}{t^{11}}$ is integral over $R$. 
	So, $\overline{R}=k[[t]]$. Since
	$t^{11}k[[t]]\subseteq R$ we see 
	$t^{11}\in \mathfrak{C}$. Since
$t^{11}\in\fm \setminus \fm^2$
we deduce $\ord(\mathfrak{C})=1$. In view of \cite[7.6]{dao}, $\fm^2$ is not reflexive. According to \cite[7.3]{dao}, the ring is not almost Gorenstein.
\end{proof}

\begin{proposition}
Let	$(R,\fm)$ be any  complete domain with  $\mathfrak{C}+xR=\fm$ for some $x\in \fm$. Then	$\ord(\mathfrak{C})=1.$
\end{proposition}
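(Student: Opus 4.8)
The plan is to prove the two inequalities $\ord(\mathfrak{C}) \geq 1$ and $\ord(\mathfrak{C}) \leq 1$ separately; the first is immediate from the hypothesis, and the second I would obtain by contradiction using Nakayama's lemma together with the structure theory of local rings whose maximal ideal is principal. First I would record that the hypothesis $\mathfrak{C} + xR = \fm$ forces $R$ to be non-normal: since $\mathfrak{C} \subseteq \mathfrak{C} + xR = \fm \subsetneq R$, the conductor is a proper ideal, so $\overline{R} \neq R$. In particular $\mathfrak{C} \subseteq \fm$ already gives $\ord(\mathfrak{C}) \geq 1$.

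For the reverse inequality, I would suppose toward a contradiction that $\ord(\mathfrak{C}) \geq 2$, that is, $\mathfrak{C} \subseteq \fm^2$. Substituting this into the hypothesis yields the chain $\fm = \mathfrak{C} + xR \subseteq \fm^2 + xR \subseteq \fm$, whence $\fm = \fm^2 + xR$. Viewing this modulo $xR$, the finitely generated $R$-module $\fm/xR$ satisfies $\fm/xR = \fm \cdot (\fm/xR)$, so Nakayama's lemma gives $\fm = xR$. Thus the assumption $\mathfrak{C} \subseteq \fm^2$ collapses $\fm$ to a principal ideal.

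Finally I would invoke the classical fact that a Noetherian local domain of positive dimension with principal maximal ideal is a $\DVR$ (indeed $\Ht(\fm) \leq 1$ by Krull's principal ideal theorem, so $\dim R = 1$, and then $R$ is regular of dimension one), hence integrally closed. This forces $\overline{R} = R$ and $\mathfrak{C} = R$, contradicting $\mathfrak{C} \subseteq \fm^2 \subsetneq R$, and the proof is complete. The argument is short, and the step carrying the real content is the Nakayama reduction turning $\mathfrak{C}\subseteq\fm^2$ into principality of $\fm$; the only point requiring care is the role of the hypotheses, where completeness (via the Nagata/excellence property) is what guarantees that $\overline{R}$ is module-finite over $R$ so that $\mathfrak{C}$ is a genuine nonzero conductor ideal, while the domain and positive-dimension assumptions are exactly what is needed to conclude that a principal maximal ideal yields a $\DVR$.
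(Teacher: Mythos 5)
Your proposal is correct and follows essentially the same route as the paper: assume $\mathfrak{C}\subseteq\fm^2$, deduce that $\fm$ is principal (you via Nakayama applied to $\fm/xR$, the paper via the cyclic module $\fm/\mathfrak{C}$ surjecting onto $\fm/\fm^2$, which is the same Nakayama-type step), conclude that $R$ is a $\DVR$, hence normal with $\mathfrak{C}=R$, contradicting the hypothesis. Your closing remarks on where completeness and positive dimension enter are sound, though strictly speaking the contradiction argument already rules out $\mathfrak{C}=0$ and the field case without invoking module-finiteness of $\overline{R}$.
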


\begin{proof}
Suppose, for the sake of contradiction, that $\mathfrak{C} \subseteq \mathfrak{m}^2$. Since $\mathfrak{C} + xR = \mathfrak{m}$ and $\mathfrak{C} \subseteq \mathfrak{m}^2$, it follows that  
$
\mathfrak{m}/\mathfrak{C} = (\mathfrak{C} + xR)/\mathfrak{C}
$
is cyclic. The natural surjection  
$
\mathfrak{m}/\mathfrak{C} \to \mathfrak{m}/\mathfrak{m}^2 \to 0
$
implies that $\mathfrak{m}/\mathfrak{m}^2$ is cyclic as well. Consequently, we obtain $\mu(\mathfrak{m}) = 1$, which in turn implies that $R$ is a discrete valuation ring ($\DVR$).  
Since $R$ is a $\DVR$, we must have $\mathfrak{C} = R$. However, this leads to a contradiction because  
$
R = \mathfrak{C} + xR = \mathfrak{m},
$
whereas $\mathfrak{m}$ is a proper ideal of $R$. Thus, our assumption that $\mathfrak{C} \subseteq \mathfrak{m}^2$ must be false.  
We have therefore shown that $\operatorname{ord}(\mathfrak{C}) \leq 1$. Moreover, note that $\operatorname{ord}(\mathfrak{C}) = 0$ if and only if $\mathfrak{C} = R$, which we have already ruled out. Hence, it follows that  
$
\operatorname{ord}(\mathfrak{C}) = 1.
$  
\end{proof}
However, Question 2.1 is true for $1$-dimensional hypersurfaces.

\begin{corollary}
	Let	$R$ be a $1$-dimensional complete hypersurface domain with $e_\fm(R)-\mu(\fm)=1$. Then  $\ord(\mathfrak{C})=2$. 
\end{corollary}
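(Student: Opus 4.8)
The plan is to first reduce to a single curve singularity and then read off $\ord(\mathfrak{C})$ from its value semigroup. Since $R$ is a $1$-dimensional hypersurface we have $\embdim R = \mu(\fm) \leq \dim R + 1 = 2$; if $\mu(\fm)=1$ then $R$ is a $\DVR$ and $e_\fm(R)-\mu(\fm)=0$, contradicting the hypothesis. Hence $\mu(\fm)=2$ and $e_\fm(R)=3$, so $R\cong k[[x,y]]/(f)$ with $\ord(f)=3$ and $f$ irreducible. Let $\overline{R}=k[[t]]$ be the normalization, $v=\ord_t$ the associated valuation, and $H=v(R)$ the value semigroup; then $\min(H\setminus\{0\})=e_\fm(R)=3$, and since $3$ is prime a branch of multiplicity $3$ has a $2$-generated semigroup, so $H=\langle 3,a\rangle$ with $\gcd(3,a)=1$ and $a\geq 4$. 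As $R$ is Gorenstein the semigroup is symmetric, whence the conductor exponent is $c=2\delta=2(a-1)$ and $\mathfrak{C}=t^{c}\overline{R}=\{g\in R:\ v(g)\geq 2a-2\}$. Finally I fix minimal generators $x,y$ of $\fm$ with $v(x)=3$ and $v(y)=a$.

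For the upper bound $\ord(\mathfrak{C})\leq 2$ I would exhibit one element of $\mathfrak{C}$ outside $\fm^3$. The natural candidate is $t^{2a}$, which lies in $\mathfrak{C}$ because $2a\geq 2a-2=c$. To see $t^{2a}\notin\fm^3$ it suffices to show $2a\notin v(\fm^3)$. Writing a general element of $\fm^3=(x^3,x^2y,xy^2,y^3)$ as $\alpha_0x^3+\alpha_1x^2y+\alpha_2xy^2+\alpha_3y^3$ with $\alpha_i\in R$, the last two terms have valuation at least $3+2a>2a$ and $3a>2a$, while $\alpha_0x^3+\alpha_1x^2y=x^2(\alpha_0x+\alpha_1y)$ has valuation in $6+H$. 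A one-line check modulo $3$ gives $2a-6\notin H$ (indeed $2a-6\equiv 2a\not\equiv 0$, so any representation $2a-6=3I+aJ$ would force $J\geq 2$ and hence $3I=2a-6-aJ<0$), so $2a\notin 6+H$; comparing the three valuation ranges shows no such combination can have valuation exactly $2a$. Thus $2a\notin v(\fm^3)$ and $t^{2a}\in\mathfrak{C}\setminus\fm^3$.

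For the lower bound $\ord(\mathfrak{C})\geq 2$ I would prove $\mathfrak{C}\subseteq\fm^2$. First I would check that every $s\in H$ with $s\geq c$ lies in $v(\fm^2)$: choosing a representation $s=3I+aJ$ one distributes it as $2a+(\,\cdot\,)$, $(3+a)+(\,\cdot\,)$, or $6+(\,\cdot\,)$ according to whether $J\geq 2$, $J=1$, or $J=0$, and the inequality $s\geq 2a-2$ guarantees the residual lies in $H$; this produces an explicit $w_s\in\fm^2$ with $v(w_s)=s$. Given $g\in\mathfrak{C}$, a standard successive-approximation argument then subtracts scalar multiples of the $w_s$ to raise the valuation step by step; the partial sums lie in $\fm^2$, and since a large $t$-adic valuation forces a large $\fm$-adic order (because $\fm\overline{R}=t^3\overline{R}$), they converge $\fm$-adically to $g$. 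As $R$ is complete and $\fm^2$ is closed, $g\in\fm^2$. Combining the two bounds gives $\ord(\mathfrak{C})=2$; alternatively this is the specialization to $e_\fm(R)=3$ of the expected answer $\ord(\mathfrak{C})=e_\fm(R)-1$ to Question \ref{q}(ii).

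The main obstacle is the step from the monomial ring $k[[t^3,t^a]]$, where both containments are immediate combinatorics on $H$, to an arbitrary branch, where $\fm^2$ and $\fm^3$ need not be monomial ideals: one must rule out that cancellation of leading terms inside $\fm^3$ creates an element of valuation $2a$, which is exactly what the arithmetic fact $2a-6\notin H$ secures, and one must justify the completeness/closedness argument for $\mathfrak{C}\subseteq\fm^2$. A secondary technical point is the normalization's residue field: the identification $H=\langle 3,a\rangle$ uses that $\overline{R}$ has residue field $k$ (e.g. $k$ algebraically closed, or the branch analytically irreducible with trivial residue extension), which holds in the cases of interest.
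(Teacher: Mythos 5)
Your proposal is correct, but it takes a genuinely different route from the paper's. The paper makes the same opening reduction ($\mu(\fm)=2$, hence $e_\fm(R)=3$) and then simply invokes Proposition \ref{e3}, where the ring is classified --- via Buchweitz--Greuel--Schreyer, Greuel--Kn\"orrer, and Yoshino's Theorem 8.5, with residue field $\mathbb{C}$ --- as $\mathbb{C}[[t^3,t^4]]$ or $\mathbb{C}[[t^3,t^5]]$; for these monomial curves $\ord(\mathfrak{C})=e_\fm(R)-1=2$ follows from \cite[5.5]{dey} or by direct inspection ($t^8$, resp.\ $t^7$, lies in $\mathfrak{C}\setminus\fm^3$). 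You instead argue intrinsically through the value semigroup: symmetry (Gorensteinness) forces $H=\langle 3,a\rangle$ with conductor exponent $c=2(a-1)$, and both containments $\mathfrak{C}\subseteq\fm^2$ and $\mathfrak{C}\not\subseteq\fm^3$ are verified by semigroup arithmetic (the key fact $2a-6\notin H$) together with a leading-term-cancellation analysis and a successive-approximation/Artin--Rees argument to pass from monomial data to arbitrary branches. What your approach buys: it treats \emph{every} multiplicity-$3$ branch uniformly, including non-monomial ones and those with $a\geq 7$ such as $k[[t^3,t^7]]$, which are not simple singularities and hence are not obviously reached by the finite-CM-type classification used in Proposition \ref{e3}; it is also essentially characteristic-free once the residually rational hypothesis you flag is granted (the same hypothesis is implicit in the paper, whose Proposition \ref{e3} carries the standing assumption $(R,\fm,\mathbb{C})$). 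The paper's route is far shorter, outsourcing all the work to classification theorems. One small repair to your write-up: the inference ``since $3$ is prime the semigroup is $2$-generated'' is not valid for abstract numerical semigroups (e.g.\ $\langle 3,7,8\rangle$ has multiplicity $3$ and embedding dimension $3$); the correct justification is the symmetry you invoke in the very next clause --- a symmetric Ap\'ery set $\{0,w_1,w_2\}$ with respect to $3$ forces $w_2=2w_1$, hence $H=\langle 3,w_1\rangle$ --- or, in characteristic zero, the theory of characteristic exponents for plane branches.
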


\begin{proof}
Since $\mu(\fm)=2$ it follows that
$e_\fm(R)=3$. In view of Proposition \ref{e3} (see below) we observe that $\ord(\mathfrak{C})=e_\fm(R)-1=3-1=2$, as claimed.
\end{proof}
In order to extend the previous result to the realm of Gorenstein rings, we impose some strong restrictions. For instance, the reducedness of $\textit{gr}_\fm(R)$ guarantees that $\fm$ is normal (see \cite[Ex. 5.7]{HS}).

\begin{fact} 
	Let $(R,\fm)$ be a $1$-dimensional Gorenstein domain with $e_\fm(R) - \mu(\fm) = 1$.
\begin{enumerate}
\item[i)] If $\fm$ is normal, then $\ord(\mathfrak{C}) = 2$. The following holds:
\item[ii)] Suppose $R$ is localization of a singular curve at not-node point. Then $\ord(\mathfrak{C}) = 2$.
	\end{enumerate}
\end{fact}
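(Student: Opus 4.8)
The plan is to reduce both parts to a computation with the value semigroup of $R$ and to exploit the symmetry forced by the Gorenstein hypothesis. First I would assume $R$ is complete, so that $V := \overline{R}$ is a discrete valuation ring with valuation $v$; write $S := v(R \setminus \{0\}) \subseteq \mathbb{Z}_{\geq 0}$ for the value semigroup, $e := e_\fm(R) = \min (S \setminus \{0\})$ for the multiplicity, $c$ for the conductor of $S$ (so that $\mathfrak{C} = \{y \in R : v(y) \geq c\}$), and $\delta := \ell(V/R)$ for the number of gaps of $S$. Note that $R$ is singular, so $\mu(\fm) \geq 2$ and hence $e = \mu(\fm) + 1 \geq 3$. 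The first key point, valid in part (i), is that normality of $\fm$ gives $\fm^n = \overline{\fm^n} = \{y \in R : v(y) \geq ne\}$ for every $n$; consequently $\mathfrak{C} \subseteq \fm^n$ if and only if $c \geq ne$, so that
\[
\ord(\mathfrak{C}) = \lfloor c/e \rfloor .
\]
Normality also identifies $\mu(\fm) = \dim_k \fm/\fm^2$ with $|S \cap [e,2e)|$, so the hypothesis $e_\fm(R) - \mu(\fm) = 1$ means that $S$ omits exactly one integer $\gamma$ from the block $[e,2e)$; thus $\gamma$ is the unique gap of $S$ in $(e,2e)$, with $e+1 \leq \gamma \leq 2e-1$.

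Next I would pin down all the gaps of $S$ in $[e,\infty)$. Because $S$ is closed under addition and $e \in S$, any gap $\beta \geq 2e$ forces $\beta - e$ to be a gap as well (otherwise $\beta = e + (\beta - e) \in S$); iterating, every gap $\geq e$ has the form $\gamma + ke$, and in each residue class the gaps form an initial segment. Combined with the previous step this shows that the gaps $\geq e$ are exactly $\gamma, \gamma + e, \dots, \gamma + (j-1)e$ for some $j \geq 1$. Hence $\delta = (e-1) + j$ and $c = \gamma + (j-1)e + 1$.

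Now I would bring in the Gorenstein hypothesis through the symmetry of $S$, which gives $c = 2\delta = 2(e-1+j)$. Equating the two expressions for $c$ yields $\gamma = 2e - 3 + 2j - (j-1)e$, and imposing $e+1 \leq \gamma$ reduces to $2(e-2) \geq j(e-2)$, whence $j \leq 2$ since $e \geq 3$. Therefore $c \in \{2e,\, 2e+2\}$, and as $e \geq 3$ both values give $\lfloor c/e \rfloor = 2$, proving (i). For (ii) the plan is to reduce to (i): when $\mu(\fm) = 2$ the ring is a hypersurface and the earlier Corollary already yields $\ord(\mathfrak{C}) = e_\fm(R) - 1 = 2$; when $\mu(\fm) \geq 3$ one uses that at a non-node point of a curve in characteristic zero the singularity is analytically irreducible with reduced tangent cone, so that $\operatorname{gr}_\fm(R)$ is reduced and $\fm$ is normal, placing us back in case (i).

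I expect the main obstacle to be exactly this last reduction in (ii): converting the geometric non-node hypothesis into the algebraic statement that $\fm$ is normal (equivalently that $\operatorname{gr}_\fm(R)$ is reduced), and, failing that, proving the two inequalities $\mathfrak{C} \subseteq \fm^2$ and $c < 3e$ directly without the clean formula $\ord(\mathfrak{C}) = \lfloor c/e \rfloor$, which is no longer available once $\fm$ is not normal. A secondary technical point is the passage to the completion when $R$ is only assumed to be a Gorenstein domain rather than analytically irreducible, together with the residue-field degree $[V/\fm_V : k]$ entering the identities $e = \min(S \setminus \{0\})$ and $\mu(\fm) = |S \cap [e,2e)|$; I would either assume $k$ large enough (for instance algebraically closed) or carry this factor through the length computations.
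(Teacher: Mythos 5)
Your proposal contains genuine gaps, one structural and one fatal, although the semigroup computation at the heart of part (i) is correct and is a genuinely different route from the paper, which simply cites Ooishi \cite[12(2)]{Oo}. The computation (all gaps of $S$ above $e$ lie in the single progression $\gamma, \gamma+e,\dots,\gamma+(j-1)e$; Kunz symmetry $c=2\delta$ forces $2(e-2)\geq j(e-2)$, hence $j\leq 2$ and $c\in\{2e,2e+2\}$, so $\lfloor c/e\rfloor=2$) is sound \emph{when $\overline{R}$ is a single DVR}, i.e.\ when $R$ is analytically irreducible. You dismiss this hypothesis as a ``secondary technical point,'' but it is not: a $1$-dimensional Gorenstein domain satisfying the hypotheses of (i) need not be unibranch. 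For instance, the ordinary triple point $R=\bigl(k[x,y]/(y^3-x^3-x^4)\bigr)_{(x,y)}$ (with $\operatorname{char}k\neq 3$ and $k$ containing the cube roots of unity) is a hypersurface domain with $e_\fm(R)=3$, $\mu(\fm)=2$, and reduced tangent cone $k[X,Y]/(Y^3-X^3)$, hence normal $\fm$; yet its completion has three minimal primes, so there is no single valuation $v$, no numerical semigroup $S$, and the symmetry you invoke takes the multibranch (Delgado) form rather than $c=2\delta$. As written, your argument proves (i) only for complete (or analytically irreducible) domains, not the statement as given.

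The gap in (ii) is worse: your reduction rests on the claim that a non-node singular point of a curve in characteristic zero is analytically irreducible with reduced tangent cone, and both halves are false. The triple point above is a non-node that is not analytically irreducible, and $R=k[t^3,t^4]_{(t^3,t^4)}$ (the cusp $y^3=x^4$, which satisfies all hypotheses of (ii): Gorenstein domain, $e_\fm(R)-\mu(\fm)=3-2=1$, not a node) has tangent cone $\operatorname{gr}_\fm(R)\cong k[X,Y]/(Y^3)$, which is not reduced; so the chain ``non-node $\Rightarrow$ $\operatorname{gr}_\fm(R)$ reduced $\Rightarrow$ $\fm$ normal'' breaks at the first arrow. (In this example $\fm$ happens to be normal anyway, but nothing in your argument shows it.) The paper never attempts this reduction: it proves (ii) by combining the final claim of Orecchia--Ramella \cite{or}, which yields $\mathfrak{C}\subseteq\fm^2$ at non-node points and hence $\ord(\mathfrak{C})\geq 2$, with \cite[5.1]{dey}, which yields $\ord(\mathfrak{C})\leq 2$ from $e_\fm(R)-\mu(\fm)=1$. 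Your ``failing that'' fallback names exactly these two inequalities as the obstacle but supplies a proof of neither, so part (ii) remains unestablished in your proposal.
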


\begin{proof}
i)  This  is in \cite[12(2)]{Oo}.

ii) By \cite[final claim]{or}, we deduce that $\mathfrak{C} \subseteq \fm^2$, and hence $\ord(\mathfrak{C}) \geq 2$. Furthermore, by \cite[5.1]{dey}, we have $\ord(\mathfrak{C}) \leq 2$, completing the proof.
\end{proof}
\begin{example}\label{oor}
Let $R_1 := k[[t^e, \ldots, t^{2e-2}]]$ with $e \geq 3$ and $R_2 := k[[t^4, t^5, t^{11}]]$.	Then $e_\fm(R_i) - \mu(\fm_i) = 1$,  $\ord(\mathfrak{C}_{R_1}) = 2$, while  $\ord(\mathfrak{C}_{R_2}) = 1$.
\end{example}

\begin{proof}
The claim for $R_1$ is in \cite[5.9]{Oo1}. In fact, $\mathfrak{C} = \fm^2$.
Let $R:=R_2$. It is easy to see that $t^{11} \overline{R} \subseteq R$. Since $t^{11} \in\fm\setminus\fm^2 $, we observe $\ord(\mathfrak{C}) = 1$. For more details, see \cite[1.4]{or}.
\end{proof}
\section{The order  of conductor looking for multiplicity }
The results of this section are in support of:
\begin{question}
 Let $(R,\fm)$ be a complete  hypersurface domain of dimension $1$. Is $\ord(\mathfrak{C})=e_\fm(R)-1$? 	
\end{question}
\begin{observation}
Let $(R,\fm,\mathbb{C})$ be a complete local domain of dimension $1$ and multiplicity 2.  Then   $\ord(\mathfrak{C})=e_\fm(R)-1$.
	\end{observation}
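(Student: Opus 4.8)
The plan is to pass to the normalization, read off the relevant data from the value semigroup, and then exhibit a single element of the conductor that lies outside $\fm^2$; this pins down $\ord(\mathfrak{C})$ exactly. Since $R$ is a complete local domain of dimension one with residue field $\mathbb{C}$, it is analytically irreducible, so its normalization $\overline{R}$ is module-finite over $R$ and is a complete local Dedekind domain, i.e.\ a complete $\DVR$; as $\mathbb{C}$ is algebraically closed and $R$ is equicharacteristic, $\overline{R}\cong\mathbb{C}[[t]]$ with normalized valuation $v$. Let $S=\{v(r):r\in R\setminus\{0\}\}$ be the value semigroup, a numerical semigroup. I will use two standard facts for this situation: $e_\fm(R)$ equals the least positive element of $S$, and $\mathfrak{C}=t^{c}\overline{R}$, where $c$ is the conductor of $S$ (the least integer with $\mathbb{Z}_{\geq c}\subseteq S$); in particular $\mathfrak{C}$ consists exactly of the elements of $R$ of value $\geq c$.

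Next I would identify $S$. The hypothesis $e_\fm(R)=2$ gives $\min(S\setminus\{0\})=2$, so every even integer lies in $S$; letting $2k+1$ be the least odd element of $S$ forces $S=\langle 2,2k+1\rangle$ for a unique $k\geq 1$. Its Frobenius number is $2(2k+1)-2-(2k+1)=2k-1$, whence $c=2k$ and $\mathfrak{C}=t^{2k}\overline{R}$. Since $e_\fm(R)=2>1$, $R$ is not regular, hence not normal, so $\mathfrak{C}\subsetneq R$ and therefore $\ord(\mathfrak{C})\geq 1$. Moreover $R$ is one-dimensional Cohen--Macaulay with $\mu(\fm)\leq e_\fm(R)=2$ and $\mu(\fm)\geq 2$ (non-regular), so $R$ has minimal multiplicity with $\mu(\fm)=2$.

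The key step is to control $v(\fm^{2})$. Because $R$ has minimal multiplicity and the residue field $\mathbb{C}$ is infinite, there is a minimal reduction $x$ of $\fm$ with $v(x)=2$ and $\fm^{2}=x\fm$. As a set $x\fm=\{xa:a\in\fm\}$, and valuation is exactly additive on a single product, $v(xa)=v(x)+v(a)=2+v(a)$; hence $v(\fm^{2})=2+v(\fm)=2+(S\setminus\{0\})$. Consequently $2k+1\notin v(\fm^{2})$, for otherwise $2k-1=(2k+1)-2\in S$, contradicting that $2k-1$ is the Frobenius number of $S$. This is the delicate point of the whole argument, and I expect it to be the main obstacle: a priori, cancellations among products of elements of $\fm$ could manufacture an element of $\fm^{2}$ of value exactly $2k+1$, and it is precisely the identity $\fm^{2}=x\fm$ coming from minimal multiplicity that rules this out cleanly.

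Finally I would conclude. Choose $z\in R$ with $v(z)=2k+1$ (possible since $2k+1\in S$). Then $v(z)=2k+1\geq 2k=c$, so $z\in t^{2k}\overline{R}=\mathfrak{C}$; and $z\notin\fm^{2}$ by the previous paragraph, so $\ord_\fm(z)=1$ and thus $\ord(\mathfrak{C})\leq 1$. Combined with $\ord(\mathfrak{C})\geq 1$ this gives $\ord(\mathfrak{C})=1=e_\fm(R)-1$, as claimed. Alternatively, observing that the classes of $z$ and $x$ are linearly independent in the two-dimensional space $\fm/\fm^{2}$, Nakayama's lemma yields $\mathfrak{C}+xR=\fm$, and one may instead invoke the Proposition above (a complete domain with $\mathfrak{C}+xR=\fm$ satisfies $\ord(\mathfrak{C})=1$) to finish.
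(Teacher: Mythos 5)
Your proof is correct, but it takes a genuinely different route from the paper's. The paper does not argue directly: it notes that a one-dimensional Cohen--Macaulay local ring of multiplicity $2$ is a hypersurface, and then treats the statement as a special case of its multiplicity-$\leq 3$ hypersurface proposition, where the classification of simple singularities (Buchweitz--Greuel--Schreyer and Greuel--Kn\"orrer, via Yoshino) pins the ring down to $\mathbb{C}[[X,Y]]/(X^2+Y^n)$, the domain hypothesis forces $n$ odd, hence $R\cong\mathbb{C}[[t^2,t^n]]$, and the conclusion is quoted from Dey--Dutta for two-generated semigroup rings. You never classify $R$: you work intrinsically with the value semigroup $S=\langle 2,2k+1\rangle$ of the analytically irreducible ring, identify $\mathfrak{C}=t^{2k}\overline{R}$ (valid here because $R$ is complete and residually rational, $\mathbb{C}$ being algebraically closed --- worth stating explicitly, since rings with a given value semigroup need not be semigroup rings, and this identification is exactly where care is needed), and you correctly flag and close the one real subtlety, namely that cancellations might produce an element of $\fm^2$ of value $2k+1$, using the minimal-multiplicity identity $\fm^2=x\fm$. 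That identity can be justified even more cheaply than via reduction theory: for any $x\in\fm$ with $v(x)=2$ one has $\ell(\fm/x\fm)=\ell(R/xR)=v(x)=2=\mu(\fm)=\ell(\fm/\fm^2)$, which forces $\fm^2=x\fm$ outright. As for what each approach buys: the paper's argument is short given the classification machinery and embeds the observation into the broader $e_\fm(R)\leq 3$ result, but it is tied to $\mathbb{C}$ and to hypersurface structure; yours is self-contained, classification-free, works in any characteristic and for any algebraically closed (indeed residually rational) residue field, and your alternative finish via $\mathfrak{C}+xR=\fm$ connects back to the paper's own proposition that a complete domain with $\mathfrak{C}+xR=\fm$ has $\ord(\mathfrak{C})=1$.
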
	

Since the above ring is Cohen-Macaulay
and of multiplicity 2, it should be hypersurface. So, it is the special case of:
\begin{proposition}\label{e3}
Let $(R,\fm,\mathbb{C})$ be a complete local hypersurface domain of dimension $1$ and multiplicity $\leq 3$. Then   $\ord(\mathfrak{C})=e_\fm(R)-1$.	\end{proposition}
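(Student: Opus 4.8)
The plan is to translate the statement into the value semigroup of $R$ and reduce to an elementary computation. Since $R$ is a complete local domain it is analytically irreducible, so $\overline{R}\cong\mathbb{C}[[t]]$ is a $\DVR$; write $v$ for its valuation, $H:=\{v(r):r\in R\setminus\{0\}\}$ for the value semigroup, and $c$ for the conductor of $H$ (the least integer with $c+\BZ_{\geq 0}\subseteq H$). Then $e_{\fm}(R)=\min(H\setminus\{0\})$ and $\mathfrak{C}=t^{c}\overline{R}=\{w\in R:v(w)\geq c\}$. As $R$ is a $1$-dimensional hypersurface, $\mu(\fm)\leq 2$, and by hypothesis $e:=e_{\fm}(R)\leq 3$. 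If $e=1$ then $R$ is a $\DVR$, $\mathfrak{C}=R$, and $\ord(\mathfrak{C})=0=e-1$; if $e=2$ then $R$ is singular, so $\mathfrak{C}\subsetneq R$ gives $\mathfrak{C}\subseteq\fm$, and since $\ord(\mathfrak{C})\leq e-1=1$ (see below) we get $\ord(\mathfrak{C})=1$. These recover the preceding Observation, and the real content is the case $e=3$, where I must show $\ord(\mathfrak{C})=2$.

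So assume $e=3$; then $\mu(\fm)=2$. Because $3$ is prime, the branch has a single characteristic exponent, so $H=\langle 3,b\rangle$ with $\gcd(3,b)=1$ and $b\geq 4$, and its conductor is $c=(3-1)(b-1)=2b-2$. The heart of the lower bound $\mathfrak{C}\subseteq\fm^{2}$ is the semigroup inclusion $[c,\infty)\cap\BZ\subseteq (H\setminus\{0\})+(H\setminus\{0\})$: for $m\geq 2b+1$ write $m=3+(m-3)$ with $m-3\geq c$, hence $m-3\in H$; and for the three remaining values one checks $2b=b+b$, $2b-1=3+(2b-4)$ and $2b-2=3+(2b-5)$, where $2b-4,2b-5\in H$ follow from $\gcd(3,b)=1$ by writing each as a multiple of $3$ or as $b$ plus a nonnegative multiple of $3$. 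Since every element $h_{1}+h_{2}$ of the right-hand side is realized as $v(rs)$ for $r,s\in\fm$ with $v(r)=h_{1}$, $v(s)=h_{2}$, we obtain $[c,\infty)\cap\BZ\subseteq v(\fm^{2})$. Now I would lift this numerical inclusion to an honest containment: given $w\in\mathfrak{C}$, pick $u_{0}\in\fm^{2}$ with $v(u_{0})=v(w)$ and the same leading coefficient (possible since $\mathbb{C}\subseteq R$), so that $v(w-u_{0})>v(w)\geq c$; iterating yields a $t$-adically convergent series $w=\sum_{k}u_{k}$ whose partial sums lie in $\fm^{2}$, and as $R$ is complete $\fm^{2}$ is closed, whence $w\in\fm^{2}$ and $\mathfrak{C}\subseteq\fm^{2}$.

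For the matching bound $\ord(\mathfrak{C})\leq e-1$ I would invoke the general inequality \cite[5.1]{dey}, valid for these $1$-dimensional Gorenstein (hence hypersurface) rings, which here reads $\ord(\mathfrak{C})\leq 2$; together with $\mathfrak{C}\subseteq\fm^{2}$ this gives $\ord(\mathfrak{C})=2=e_{\fm}(R)-1$, as wanted. I expect the passage from the numerical inclusion $[c,\infty)\cap\BZ\subseteq v(\fm^{2})$ to the ideal containment $\mathfrak{C}\subseteq\fm^{2}$ to be the main subtlety: the ideal $\fm^{2}$ need not be saturated with respect to $v$ (cancellation among leading terms can produce elements of $\fm^{2}$ of unexpectedly high value, so one cannot merely compare value sets). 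What saves the argument is that the target $\mathfrak{C}=t^{c}\overline{R}$ \emph{is} saturated and that $R$ is complete, so the term-by-term approximation converges inside the closed ideal $\fm^{2}$. The identification $H=\langle 3,b\rangle$ together with the explicit handling of the three boundary values $2b-2,2b-1,2b$ is exactly where the hypothesis $e\leq 3$ enters.
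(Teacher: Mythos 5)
Your proof is correct, but it takes a genuinely different route from the paper. The paper argues by classification: for $e_\fm(R)=2$ it invokes Buchweitz--Greuel--Schreyer and Greuel--Kn\"orrer to reduce to an $A_n$-singularity, with the domain hypothesis forcing $n$ odd and $R\cong\mathbb{C}[[t^2,t^n]]$; for $e_\fm(R)=3$ it quotes Yoshino's list \cite[Theorem 8.5]{y} of simple hypersurface singularities of multiplicity $3$, discards the non-domain $X^3+XY^3$, and is left with $\mathbb{C}[[t^3,t^4]]$ and $\mathbb{C}[[t^3,t^5]]$; in all cases it then cites \cite[5.5]{dey} (or a direct check of $t^8$, resp.\ $t^7$) to conclude $\ord(\mathfrak{C})=e_\fm(R)-1$. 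You instead work intrinsically with the value semigroup: $H=\langle 3,b\rangle$ with $\gcd(3,b)=1$ (single characteristic exponent, since the multiplicity is prime; equivalently one could use Kunz's symmetry criterion), conductor $c=2b-2$, the sumset inclusion $[c,\infty)\cap\BZ\subseteq(H\setminus\{0\})+(H\setminus\{0\})$ checked at the three boundary values, and a leading-coefficient approximation using completeness to lift the numerical inclusion to $\mathfrak{C}\subseteq\fm^2$ --- and you are right that this lifting is the genuine subtlety, since $\fm^2$ is not $v$-saturated. What your approach buys is significant: it is uniform in $b$ and in the analytic type, whereas the paper's $e_\fm(R)=3$ step tacitly assumes the singularity is simple (of finite CM type), which a multiplicity-$3$ complete hypersurface domain need not be --- e.g.\ $\mathbb{C}[[X,Y]]/(X^3+Y^7)\cong\mathbb{C}[[t^3,t^7]]$ is a domain of multiplicity $3$ absent from Yoshino's list, and for large $b$ there are even branches with semigroup $\langle 3,b\rangle$ not analytically isomorphic to the monomial curve. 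Your argument covers all of these, so it is strictly more general than the paper's. Both treatments ultimately lean on Dey--Dutta for an upper bound (the paper on \cite[5.5]{dey}, you on \cite[5.1]{dey}, used the same way the paper uses it in its Fact on Gorenstein rings with $e_\fm(R)-\mu(\fm)=1$); if you want to be self-contained there, the Jacobian argument of Proposition \ref{in} adapts to the complete case, since some partial derivative of $f$ has order exactly $e_\fm(R)-1$ and lies in $\mathfrak{C}$ by Noether's theorem. Two cosmetic slips that do not affect correctness: your parenthetical ``Gorenstein (hence hypersurface)'' has the implication backwards (hypersurface implies Gorenstein), and $\mathfrak{C}=\{w\in R: v(w)\geq c\}$ should read $w\in\overline{R}$, though the two sets coincide once $t^c\overline{R}\subseteq R$ is known.
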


\begin{proof}
Note that $\ord(\mathfrak{C})=0$ if and only if $\mathfrak{C}=R$, if and only if $R$ is normal, if and only if $R$ is regular, if and only if $e_\mathfrak{m}(R)=1$.  

Now, assume $e_\mathfrak{m}(R)=2$.  
Since $R$ is analytically unramified, it follows from a theorem of Buchweitz-Greuel-Schreyer that $R$ is a simple singularity. Moreover, by a theorem of Greuel-Knörrer, since $R$ is a hypersurface, it must have an $A_n$-singularity. In other words,  
$
R := \mathbb{C}[[X,Y]]/(X^2+Y^n).
$ 
Suppose, for the sake of contradiction, that $n=2m$ is even. Then,  
\(
0 = X^2 + Y^n = (X + iY^m)(X - iY^m),
\)
where $i \in \mathbb{C}$ is the imaginary unit satisfying $i^2 = -1$. This contradicts the assumption that $R$ is a domain. Hence, $n$ must be odd. From this, we conclude  
$
R \cong \mathbb{C}[[t^2,t^n]].
$  
Since $\gcd(2,n) = 1$, it follows from \cite[5.5]{dey} that  
$
\ord(\mathfrak{C}) = e_\mathfrak{m}(R) - 1.
$ 

Next, assume $e_\mathfrak{m}(R)=3$. By \cite[Theorem 8.5]{y}, we see that $R$ must be one of the following:  
\begin{center}
 $\big\{  \mathbb{C}[[X,Y]]/(X^3+Y^4),  
 \mathbb{C}[[X,Y]]/(X^3+Y^5),  \mathbb{C}[[X,Y]]/(X^3+XY^3)\big\}$.  
\end{center}  

In the third case, we derive a contradiction, as the ring contains a zero-divisor. Thus, we must have either  
$R \cong \mathbb{C}[[t^3,t^4]]$  or $ R \cong \mathbb{C}[[t^3,t^5]].
$  
Since $\gcd(3,4) = 1 = \gcd(3,5)$, it follows from \cite[5.5]{dey}, or even directly\footnote{$t^8 \in \mathfrak{C}_{R_1} \setminus \mathfrak{m}^3$ and $t^7 \in \mathfrak{C}_{R_2} \setminus \mathfrak{m}^3$. Thus, $\ord(\mathfrak{C}) = 2$.}, that  
$
\ord(\mathfrak{C}) = e_\mathfrak{m}(R) - 1.
$
\end{proof}

For any $f\in R$, we define  $\ord(f):= \ord(fR)$.

\begin{proposition}\label{in}
Suppose $R$ is an affine hypersurface over a field of zero characteristic and   of dimension $1$. Then $\ord(\mathfrak{C})\leq e_\fm(R)-1$.
\end{proposition}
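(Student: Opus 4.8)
The plan is to reduce to an explicit plane-curve presentation and then exhibit a single element of the conductor whose $\fm$-order is exactly $e_\fm(R)-1$. Since $R$ is a $1$-dimensional affine hypersurface, it has the form $R = k[x,y]/(f)$ for a single $f$ (two variables, because a hypersurface of Krull dimension $1$ sits in $\mathbb{A}^2$), and $R$ is a domain, so $f$ is irreducible. After translating the maximal ideal $\fm$ to the origin we have $e_\fm(R) = \ord_{\fm_S}(f) =: e$, where $S = k[x,y]$ and $\ord_{\fm_S}$ is the $(x,y)$-adic order, i.e.\ $e$ is the degree of the initial form $f_e$ of $f$. Here is where characteristic zero enters decisively: by Euler's identity $e\cdot f_e = x\,\partial_x f_e + y\,\partial_y f_e$, the nonzero form $f_e$ cannot have both partials vanishing, so after a generic linear change of coordinates (which fixes the origin, hence preserves both $\fm$ and the multiplicity) we may simultaneously arrange that $\ord_{\fm_S}(\partial_y f) = e-1$ and that $f$ is, up to a nonzero scalar, monic in $y$, so that $R$ is a finite free $k[x]$-module with $R = k[x][\bar y]$.

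The key input is the classical Dedekind different/conductor relation, valid precisely because $\mathrm{Frac}(R)$ is separable over $k(x)$ in characteristic zero and $A := k[x]$ is normal. Writing $\overline{R}$ for the normalization and $\mathfrak{d}_{\overline{R}/A}\subseteq \overline{R}$ for the different, one has the equality of $\overline{R}$-ideals
\[
(\partial_y f)\,\overline{R} \;=\; \mathfrak{C}\cdot \mathfrak{d}_{\overline{R}/A},
\]
where $\mathfrak{C}=\{z\in\overline{R}: z\overline{R}\subseteq R\}$ is exactly the conductor of $R=A[\bar y]$ in $\overline{R}$ (this is the Lipman--Sathaye Jacobian theorem in the monogenic case). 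Since $\mathfrak{d}_{\overline{R}/A}\subseteq\overline{R}$ and $\mathfrak{C}$ is an $\overline{R}$-ideal, it follows that $(\partial_y f)\,\overline{R}\subseteq\mathfrak{C}$, and in particular the image of $\partial_y f$ in $R$ lies in $\mathfrak{C}$. As the conductor commutes with localization, the same containment holds in $R_\fm$.

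It remains to read off the order. I would record the elementary observation that for $g\in S$ with $\ord_{\fm_S}(g) < e = \ord_{\fm_S}(f)$, the image $\bar g\in R$ satisfies $\ord_{\fm}(\bar g)=\ord_{\fm_S}(g)$: any congruence $g \equiv u \pmod{(f)}$ with $u\in\fm_S^{\,n}$ forces $\ord_{\fm_S}(u)=\ord_{\fm_S}(g)$ because adding a multiple of $f$, which has order $\ge e$, cannot raise the order of something of order $< e$. Applying this to $g=\partial_y f$, whose order is $e-1<e$, gives an element $\overline{\partial_y f}\in\mathfrak{C}$ with $\ord_\fm(\overline{\partial_y f})=e-1$. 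Hence $\mathfrak{C}\not\subseteq\fm^{e}$, i.e.\ $\ord(\mathfrak{C})\le e_\fm(R)-1$, as claimed. I expect the main obstacle to be pinning down the different/conductor identity in exactly the generality needed (monogenic but not necessarily with $A[\bar y]=\overline{R}$, only $A[\bar y]=R\subseteq\overline{R}$) and verifying that a single generic linear change secures both the finiteness over $k[x]$ and the sharp order $\ord(\partial_y f)=e-1$; the latter is the step that genuinely requires characteristic zero and would fail for, say, $f=x^p+y^q$.
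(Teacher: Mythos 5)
Your proof is correct and takes essentially the same route as the paper: both arguments rest on the containment of a partial derivative of $f$ in the conductor (the paper directly cites Noether's theorem, via Vasconcelos, that the Jacobian ideal $\left(\frac{\partial f}{\partial X},\frac{\partial f}{\partial Y}\right)\subseteq\mathfrak{C}$, whereas you rederive the needed containment through Noether normalization and the Dedekind different/Lipman--Sathaye identity) combined with the characteristic-zero fact, from Euler's identity, that some partial derivative of $f$ has order exactly $e_\fm(R)-1$. Your explicit order-preservation lemma (images in $R$ of elements of order $<e$ keep their order) merely makes precise the comparison the paper leaves implicit in its final contradiction.
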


\begin{proof}
Suppose $R= \mathbb{C}[X,Y]/(f)$ for some $f\in \mathbb{C}[X,Y]$. Recall that  
$
e_\mathfrak{m}(R) = \ord(f)
$  
(see \cite[Example 11.2.8]{HS}).  
Suppose, for the sake of contradiction, that  
$
\ord(\mathfrak{C}) > e_\mathfrak{m}(R) - 1.
$ 
Then we have  
$
\mathfrak{C} \subseteq \mathfrak{m}^{e_\mathfrak{m}(R)}.
$ 
It is known from \cite{vas}, with a contribution to E.~Noether, that  
\[
\left( \frac{\partial f}{\partial X}, \frac{\partial f}{\partial Y} \right) \subseteq \mathfrak{C}.
\]  
By symmetry, we may assume that $\frac{\partial f}{\partial X}$ is nonzero and has order exactly $\ord(f) - 1$. Combining these facts, we conclude that  
$
\frac{\partial f}{\partial X} \in \mathfrak{m}^{\ord(f)}.
$ 
This leads to a contradiction. 
\end{proof}	
	
	\begin{remark}
Let $(R,\fm)$ be a complete hypersurfacee local domain of dimension $1$ and  $\mathfrak{C}=\fm^i$ with $i\leq 3$, then $\ord(\mathfrak{C}) \geq e_\fm(R)-1$.  	\end{remark}

\begin{proof}
We present the case for $i = 3$. By the exact sequence  
$
0 \to \mathfrak{m}/\mathfrak{m}^2 \to R/\mathfrak{m}^2 \to R/\mathfrak{m} \to 0,
$  
we know that  
$
\ell(R/\mathfrak{m}^2) = \mu(\mathfrak{m}) + 1 = 3.
$  
Since $\mu(\mathfrak{m}) = 2$, we deduce that $\mu(\mathfrak{m}^2) \leq 3$. Similarly,  
\[
\ell(R/\mathfrak{m}^3) = \mu(\mathfrak{m}^2) + \ell(R/\mathfrak{m}^2) \leq 3 + \ell(R/\mathfrak{m}^2) = 6.
\]  
Recall from \cite[Corollary 12.2.4]{HS} that  
\[
12 \geq 2\ell(R/\mathfrak{m}^3) = 2\ell(R/\mathfrak{C}) = \ell(\overline{R}/\mathfrak{C}) =: \ell,
\]  
where $\overline{R}$ is the integral closure of $R$. Since $R$ is complete-local, $\overline{R}$ is also local and, moreover, regular. Thus, its maximal ideal is principal, and we denote its uniformizer by $t$. Then  
$
\mathfrak{m}^3 \overline{R} = t^\ell \overline{R}.
$ 
Since the extension of ideals is multiplicative, we deduce that  
$
\mathfrak{m}^{3n} \overline{R} = t^{n\ell} \overline{R}.
$  
Now, recall that  
\[
e_\mathfrak{m}(\overline{R}) = \lim_{n\to \infty} \frac{\ell(\overline{R}/\mathfrak{m}^{3n} \overline{R})}{3n}  
= \lim_{n\to \infty} \frac{\ell(\overline{R}/t^{n\ell} \overline{R})}{3n}  
= \frac{n\ell}{3n} \leq \frac{12}{3} = 4.
\]  
Consequently,  
$
e_\mathfrak{m}(R) = \frac{e_\mathfrak{m}(\overline{R})}{\operatorname{rank} \overline{R}} \leq 4 = \ord(\mathfrak{m}^{3n}) + 1 = \ord(\mathfrak{C}) + 1.
$ 
	\end{proof}

The previous remark illustrates a more general phenomena:
\begin{proposition}\label{power}
Let $(R,\fm)$ be a hypersurfacee local domain of dimension $1$ and  $\mathfrak{C}=\fm^{n-1}$. Suppose
$\ord(\mathfrak{C})\leq e_\fm(R)-1$.
Then $\ord(\mathfrak{C})= e_\fm(R)-1$.  
	\end{proposition}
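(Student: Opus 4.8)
The plan is to note that the shape $\mathfrak{C}=\fm^{n-1}$ already determines $\ord(\mathfrak{C})$, and then to bound the multiplicity from above tightly enough to match it. Since $\fm^{n-1}=\fm^n$ would force $\fm^{n-1}=0$ by Nakayama, which is impossible in a positive-dimensional domain, we have $\fm^{n-1}\not\subseteq\fm^n$, so $\ord(\mathfrak{C})=\ord(\fm^{n-1})=n-1$. The statement therefore reduces to the single inequality $e_\fm(R)\le n$: combined with the hypothesis $\ord(\mathfrak{C})\le e_\fm(R)-1$, i.e.\ $n\le e_\fm(R)$, this yields $n=e_\fm(R)$ and hence $\ord(\mathfrak{C})=e_\fm(R)-1$. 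If $R$ is regular then $e_\fm(R)=1$, $\mathfrak{C}=R=\fm^{0}$ and the claim is trivial, so I may assume $R$ is singular, in which case $\embdim R=\mu(\fm)=2$.

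Next I would extract $e_\fm(R)$ from the conductor exactly as in the preceding remark. Writing $\overline{R}$ for the normalization (a $\DVR$ with uniformizer $t$), $\mathfrak{C}$ is a common ideal of $R$ and $\overline{R}$, so $\mathfrak{C}=t^{\ell}\overline{R}$ with $\ell:=\ell(\overline{R}/\mathfrak{C})$. Because $\fm^{(n-1)m}\overline{R}=(\mathfrak{C}\,\overline{R})^{m}=t^{m\ell}\overline{R}$, and $\rank\overline{R}=1$, the multiplicity is computed by
\[
e_\fm(R)=e_\fm(\overline{R})=\lim_{m\to\infty}\frac{\ell(\overline{R}/\fm^{(n-1)m}\overline{R})}{(n-1)m}=\lim_{m\to\infty}\frac{m\ell}{(n-1)m}=\frac{\ell}{\,n-1\,},
\]
so that $\ell(\overline{R}/\mathfrak{C})=e_\fm(R)\,(n-1)$. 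Invoking the Gorenstein symmetry $\ell(\overline{R}/\mathfrak{C})=2\,\ell(R/\mathfrak{C})$ (from \cite[Corollary 12.2.4]{HS}, already used above) then gives $2\,\ell(R/\fm^{n-1})=e_\fm(R)\,(n-1)$.

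Finally I would bound $\ell(R/\fm^{n-1})$ from above using only $\embdim R=2$. Since $\fm^{j}/\fm^{j+1}$ is a quotient of the $j$-th symmetric power of $\fm/\fm^{2}$, we get $\dim_{k}\fm^{j}/\fm^{j+1}\le j+1$, and therefore
\[
\ell(R/\fm^{n-1})=\sum_{j=0}^{n-2}\dim_{k}\fm^{j}/\fm^{j+1}\le\sum_{j=0}^{n-2}(j+1)=\frac{n(n-1)}{2}.
\]
Substituting into the identity above yields $e_\fm(R)\,(n-1)=2\,\ell(R/\fm^{n-1})\le n(n-1)$, and cancelling $n-1>0$ gives the desired $e_\fm(R)\le n$, hence $\ord(\mathfrak{C})=n-1\ge e_\fm(R)-1$.

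The main obstacle is this last inequality: the argument succeeds precisely because a hypersurface is forced to have embedding dimension $2$, which pins $\dim_{k}\fm^{j}/\fm^{j+1}$ below $j+1$ and makes the triangular-number bound exactly matched against $e_\fm(R)(n-1)$. I would also need to ensure that $\overline{R}$ is genuinely a $\DVR$ and that lengths over $R$ and over $\overline{R}$ coincide—this is where completeness (analytic irreducibility and trivial residue field extension) enters, exactly as in the preceding remark—and to record that the hypothesis $\ord(\mathfrak{C})\le e_\fm(R)-1$ is what supplies the reverse inequality so that the two bounds close up to an equality.
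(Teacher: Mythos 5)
Your proof is correct, and it shares the paper's skeleton: both compute $\ell(\overline{R}/\mathfrak{C})$ twice, once via the Gorenstein duality $\ell(\overline{R}/\mathfrak{C})=2\,\ell(R/\mathfrak{C})$ and once via the multiplicity limit over the $\DVR$ $\overline{R}$, using $\mathfrak{C}=t^{\ell}\overline{R}$ and multiplicativity of ideal extension to get $\fm^{k(n-1)}\overline{R}=t^{k\ell}\overline{R}$. The genuine divergence is how $\ell(R/\fm^{n-1})$ is controlled. The paper invokes Matlis' step lemma $\mu(\fm^{i})=\min\{i+1,\,e_\fm(R)\}$ to obtain the \emph{exact} value $\ell(R/\fm^{n-1})=\frac{n(n-1)}{2}$; this is valid only because the hypothesis $\ord(\mathfrak{C})\le e_\fm(R)-1$ forces $n\le e_\fm(R)$, so that $\mu(\fm^{i})=i+1$ for all $i\le n-2$ — indeed this is the only point where the paper's proof uses the hypothesis, and it does so silently. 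You instead use the elementary bound $\dim_{k}\fm^{j}/\fm^{j+1}\le j+1$ coming from $\embdim R=2$ and symmetric powers, which holds unconditionally, obtaining the one-sided inequality $e_\fm(R)\le n$ and then closing with the hypothesis read as $n\le e_\fm(R)$. Your organization buys transparency about where the hypothesis enters and avoids citing Matlis; you also supply two steps the paper leaves tacit but needs: the Nakayama argument that $\ord(\fm^{n-1})=n-1$ (required both to translate the hypothesis into $n\le e_\fm(R)$ and to convert $e_\fm(R)=n$ into the stated conclusion) and the disposal of the regular case $n=1$. Both arguments rest on the same unstated assumptions — that $\overline{R}$ is a local $\DVR$, module-finite over $R$ with trivial residue field extension, so that lengths match valuations, which in effect requires completeness or analytic irreducibility as in the preceding Remark — and you flag this explicitly where the paper does not.
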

\begin{proof} We will use Matlis' step lemma:
\[
\mu(\mathfrak{m}^i) = \begin{cases}
		i + 1 & \text{if } i \leq e_\mathfrak{m}(R) - 1, \\
		e_\mathfrak{m}(R) & \text{otherwise,}
		\end{cases}
\]
(see \cite[Theorem 13.7]{E}).  
By the exact sequence  
	\(	0 \to \mathfrak{m}^{n-2}/\mathfrak{m}^{n-1} \to R/\mathfrak{m}^{n-1} \to R/\mathfrak{m}^{n-2} \to 0,
		\)  we know that \(
\ell(R/\mathfrak{m}^{n-1}) = \mu(\mathfrak{m}^{n-2}) + \ell(R/\mathfrak{m}^{n-2}).
		\) 
Thanks to induction, we have  
\[
		\begin{array}{ll}
		\ell(R/\mathfrak{m}^{n-1}) &= \mu(\mathfrak{m}^{n-2}) + \mu(\mathfrak{m}^{n-3}) + \cdots + \mu(\mathfrak{m}) + \ell(R/\mathfrak{m}) \\
		&= (n-1) + (n-2) + \cdots + 2 + 1 \\
		&= \frac{n(n-1)}{2}.
		\end{array}
\]  
		
		It turns out from the Gorenstein property that  
	$
		\ell(\overline{R}/\mathfrak{m}^{n-1} \overline{R}) = 2 \ell(R/\mathfrak{m}^{n-1}) = n(n-1),
	$	and so  
$
\mathfrak{m}^{k(n-1)} \overline{R} = t^{k n(n-1)} \overline{R}$  for any $ k \in \mathbb{N}.
$	Consequently,  
$e_\mathfrak{m}(R) = e_\mathfrak{m}(\overline{R}) = \frac{k n(n-1)}{k (n-1)} =n $
		and the claim follows.\end{proof}

Concerning Proposition \ref{power}, the hypersurface assumption is important. For example,
the conductor of $k[[t^4,t^5,t^6]]$ is $\fm^2$ (see either Example \ref{oor} or \cite[7.13]{dao}). Since the ring is complete-intersection, it is enough to apply:

	\begin{proposition}\label{37}
	Let $(R,\fm)=k[[X,Y,Z]]/ I$ with $I\subseteq \fn^2$ be a Gorenstein local domain of dimension $1$ and  $\mathfrak{C}=\fm^{2}$. 
	Then $\ord(\mathfrak{C})= e_\fm(R)-\mu(I)= e_\fm(R)-2$.	\end{proposition}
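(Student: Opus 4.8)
The plan is to notice that the asserted chain of equalities is purely numerical: since $\ord(\mathfrak C)=\ord(\fm^2)$ and $\mathfrak C=\fm^2$, everything reduces to the two separate computations $\mu(I)=2$ and $e_\fm(R)=4$, after which the identity $\ord(\mathfrak C)=e_\fm(R)-\mu(I)=e_\fm(R)-2$ is just $2=4-2$. First I would record the easy reductions. Because $I\subseteq\fn^2$, the ideal $I$ contains no element of order one, so the images of $X,Y,Z$ minimally generate $\fm$ and $\embdim R=\mu(\fm)=3$. Since $R$ is a $1$-dimensional domain, Nakayama gives $\fm^2\neq\fm^3$, whence $\fm^2\subseteq\fm^2$ but $\fm^2\not\subseteq\fm^3$; thus $\ord(\mathfrak C)=\ord(\fm^2)=2$. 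It then remains to pin down $\mu(I)$ and $e_\fm(R)$.

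For $\mu(I)$, set $S:=k[[X,Y,Z]]$, a regular local ring of dimension $3$. Then $\Ht I=\dim S-\dim R=2$, and $I$ is a perfect grade-two ideal with $S/I=R$ Gorenstein. By the Hilbert--Burch theorem the minimal free resolution of $R$ over $S$ has the shape $0\to S^{\,n-1}\xrightarrow{\varphi} S^{\,n}\to S\to R\to 0$ with $n=\mu(I)$; applying $\Hom_S(-,S)$ and taking the top cohomology presents the canonical module $\omega_R\cong\Ext^2_S(R,S)$ as the cokernel of $\varphi^{\mathrm T}\colon S^n\to S^{n-1}$, so $\mu(\omega_R)=n-1$. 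Gorensteinness forces $\omega_R\cong R$, hence $n-1=1$; equivalently this is Serre's theorem that a grade-two Gorenstein ideal in a regular local ring is a complete intersection. Therefore $\mu(I)=2$ and $e_\fm(R)-\mu(I)=e_\fm(R)-2$.

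To show $e_\fm(R)=4$ I would reuse the conductor--multiplicity machinery already deployed in the Remark preceding this proposition and in Proposition \ref{power}. From $0\to\fm/\fm^2\to R/\fm^2\to R/\fm\to0$ and $\mu(\fm)=3$ we get $\ell(R/\mathfrak C)=\ell(R/\fm^2)=4$. As $R$ is complete local, $\overline R$ is a complete $1$-dimensional normal local domain, i.e.\ a $\DVR$, say with uniformizer $t$; the Gorenstein symmetry of the conductor \cite[Corollary 12.2.4]{HS} gives $\ell(\overline R/\mathfrak C)=2\,\ell(R/\mathfrak C)=8$. Since $\mathfrak C=\fm^2$ is already an ideal of $\overline R$, we have $\fm^2\overline R=\mathfrak C=t^8\overline R$, hence $\fm^{2k}\overline R=t^{8k}\overline R$ for all $k$, and therefore
\[
e_\fm(R)=\lim_{k\to\infty}\frac{\ell(\overline R/\fm^{2k}\overline R)}{2k}=\lim_{k\to\infty}\frac{8k}{2k}=4.
\]
Combining the three computations yields $\ord(\mathfrak C)=2=4-2=e_\fm(R)-\mu(I)=e_\fm(R)-2$, as claimed.

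The hard part is really only the passage from the conductor to the normalization in the computation of $e_\fm(R)$: one must know that $\overline R$ is a $\DVR$ and that the Gorenstein hypothesis delivers $\ell(\overline R/\mathfrak C)=2\,\ell(R/\mathfrak C)$. Both are standard and are used verbatim in the preceding Remark, so once $\mu(\fm)=3$ and $\mu(I)=2$ are in place the rest is bookkeeping; the single genuinely structural input is Serre's characterization of codimension-two Gorenstein ideals as complete intersections, which is what pins $\mu(I)$ to $2$.
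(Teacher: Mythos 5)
Your proof is correct and takes essentially the same route as the paper: the computation $e_\fm(R)=4$ via $\ell(R/\fm^2)=\mu(\fm)+1=4$, the Gorenstein symmetry $\ell(\overline{R}/\mathfrak{C})=2\ell(R/\mathfrak{C})$ from \cite[Corollary 12.2.4]{HS}, and the identity $\fm^{2k}\overline{R}=t^{8k}\overline{R}$ in the $\DVR$ $\overline{R}$ is exactly the paper's argument. The only differences are cosmetic: where the paper simply cites that a height-two prime with Gorenstein quotient in a regular local ring is generated by a regular sequence, you re-derive this via Hilbert--Burch, and you additionally spell out the easy Nakayama check that $\ord(\fm^2)=2$, which the paper leaves implicit.
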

\begin{proof}  By the exact sequence  
$
	0 \to \mathfrak{m}/\mathfrak{m}^2 \to R/\mathfrak{m}^2 \to R/\mathfrak{m} \to 0,
$  
	we know that  
	\(
	\ell(R/\mathfrak{m}^2) = \mu(\mathfrak{m}) + \ell(R/\mathfrak{m}).
	\) 
	It turns out that  
$
	\ell(\overline{R}/\mathfrak{m}^2 \overline{R}) = 2 \ell(R/\mathfrak{m}^2) = 8,
$ 
	and so  
$
	\mathfrak{m}^{2n} \overline{R} = t^{8n} \overline{R}$   {for any } $n \in \mathbb{N}.$
Consequently,  
	\(
	e_\mathfrak{m}(R) = e_\mathfrak{m}(\overline{R}) = \frac{8n}{2n} = 4.
	\) 
In particular,  
$
	\ord(\mathfrak{C}) = e_\mathfrak{m}(R) - 2.
$ 
	The ideal $I$ is prime, because $R$ is a domain. Since $R$ is Gorenstein, $I$ is prime and $\operatorname{ht}(I) = 2$, we know that $I$ is generated by a regular sequence of length two. In particular, $R$ is a complete intersection.
\end{proof}

\section{The reflexivity problem of ideals of colength 2}

In this section, we explore Question \ref{q1}, motivated by the significance of Gorenstein property in Proposition \ref{37}.
 For example,
 the conductor of $R:=k[[t^5,t^6,t^7]]$ is $\fm^2$  (see \cite[7.12]{dao}), but its multiplicity is $5$, and so  $\ord(\mathfrak{C})= e_\fm(R)-3$. It is well-known that $R$ is nearly Gorenstein, but not far-flung Gorenstein. Since type of the ring is two, one may
 inspired with the example and ask:
 \begin{question}
 	Let $(R,\fm)=k[[X,Y,Z]]/ I$ where $I\subseteq \fn^2$ be a  local domain of dimension $1$ and  $\mathfrak{C}$ is a power of $\fm$. 
When is $\ord(\mathfrak{C})= e_\fm(R)-(type(R)+1)$?  	\end{question}

\begin{example} Let
$R:=k[[t^3,t^4,t^5]]$. The following holds:
\begin{enumerate}
	\item [i)]  There is an ideal of colength two which is not reflexive, but the
	colength one ideals are reflexive.
		\item [ii)] Type of the ring is two, $R$ is   far-flung Gorenstein and nearly Gorenstein with $e_\fm(R)=3$.
		\item [iii)] The conductor is $\fm$. In particular, $\ord(\mathfrak{C})\neq e_\fm(R)-(type(R)+1)$.
\end{enumerate}
\end{example}

\begin{proof}
 Recall that $\omega_R=(t^3,t^4)$ is not reflexive, and that $\fm t^5\subseteq\omega_R$. This gives the exact sequence
$0\to \omega_R\to\fm\to k\to 0$ which induces the exact sequence $0\to\fm/ \omega_R\to R/ \omega_R\to R/\fm\to 0$ with $\fm/ \omega_R\cong k$. Since $\mathfrak{m}/\omega_R \cong k$, we deduce that
$\ell(R/\omega_R)=2$. The rest should be clear.
\end{proof}

The above example is almost Gorenstein and  of minimal multiplicity.

\begin{observation}\label{rem1}
Let $(R,\fm)$ be a $1$-dimensional complete almost Gorenstein domain, of minimal multiplicity.  Suppose any colength two ideal is  reflexive {and trace}.
	Then $R$  is  hypersurface. 
\end{observation}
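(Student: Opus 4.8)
The plan is to argue by contraposition: assuming $R$ is \emph{not} a hypersurface, I will exhibit a colength-two ideal that fails to be reflexive, contradicting the hypothesis. The engine of the argument is part (i) of the Observation recorded in the introduction, which I am free to invoke here: since $R$ is a $1$-dimensional complete almost Gorenstein domain of minimal multiplicity that is not a hypersurface, there is a short exact sequence $0 \to \omega_R \to \omega_R^{\ast\ast} \to k \to 0$ with $\omega_R^{\ast\ast} \cong \fm$. Note that this input needs only the almost Gorenstein and minimal multiplicity hypotheses, not reflexivity, so there is no circularity.

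Next I would realize $\omega_R$ concretely as an ideal. Being the canonical module of a $1$-dimensional Cohen--Macaulay domain, $\omega_R$ is a rank-one maximal Cohen--Macaulay (torsion-free) module, hence isomorphic to an ideal of $R$. Fixing the isomorphism $\omega_R^{\ast\ast}\cong\fm$ and composing it with the inclusion $\omega_R \hookrightarrow \omega_R^{\ast\ast}$ identifies $\omega_R$ with an ideal $J \subseteq \fm$ whose cokernel satisfies $\fm/J \cong k$, of length one. Consequently $\ell(R/J) = \ell(R/\fm) + \ell(\fm/J) = 1 + 1 = 2$, so $J$ is an ideal of colength two.

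It then remains to observe that $J$ is not reflexive. Since formation of the double dual is functorial and preserves isomorphisms, $J^{\ast\ast} \cong \omega_R^{\ast\ast} \cong \fm$, and under this identification the natural biduality map $J \to J^{\ast\ast}$ is precisely the inclusion $\omega_R \hookrightarrow \omega_R^{\ast\ast}$, whose cokernel is $k \neq 0$. Hence $J \to J^{\ast\ast}$ is not surjective and $J$ is not reflexive. This produces a colength-two ideal that is not reflexive, contradicting the standing assumption that every colength-two ideal is reflexive; therefore $R$ must be a hypersurface. (Only the reflexivity half of the hypothesis is used; the trace condition is not needed for this implication.)

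The argument is short once the structural input is in hand, so the genuine difficulty lies entirely upstream, in the fact $\omega_R^{\ast\ast}\cong\fm$ together with the length-one cokernel. I expect the main obstacle, were one to establish part (i) from scratch, to be pinning down the reflexive hull $\omega_R^{\ast\ast}$: the natural route is to combine minimal multiplicity (so that $\fm^2 = x\fm$ for a minimal reduction $x$ of $\fm$) with the defining almost Gorenstein sequence $0 \to R \to \omega_R \to C \to 0$, in which the finite-length cokernel $C$ is an Ulrich module and hence a $k$-vector space, and then to use these to force $\omega_R^{\ast\ast}$ to be $\fm$ and to control the colength exactly.
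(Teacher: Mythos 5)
Your argument, read inside this paper, is circular, and the parenthetical in your first paragraph does not break the circle. The structural input you invoke --- the exact sequence $0 \to \omega_R \to \omega_R^{\ast\ast} \to k \to 0$ with $\omega_R^{\ast\ast} \cong \fm$ for a non-hypersurface $R$ --- is the proposition that the paper proves only \emph{after} this observation, and its proof there opens with ``By the argument of Observation \ref{rem1} \dots''. Concretely, that proof needs two inputs: (a) the reduction to the non-Gorenstein case, which uses the closing step of the very statement you are proving (complete $+$ Gorenstein $+$ minimal multiplicity $\Rightarrow$ hypersurface, via the Artinian reduction $R/xR$ with $\fm^2 = x\fm$ and the socle computation); and (b) the assertion that $\omega_R$ is not reflexive when $R$ is not Gorenstein, which in this paper is only ever extracted inside the proof of the present observation, and there it is deduced from the \emph{trace} hypothesis: if the colength-two ideal $\omega_R$ is trace, then $\tr(\omega_R)=\omega_R$, whence $\omega_R^{\ast} = \Hom_R(\omega_R,\omega_R) = R$ and $\omega_R^{\ast\ast} \cong R^{\ast} \cong R \ncong \omega_R$. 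Your remark that the proposition ``needs only the almost Gorenstein and minimal multiplicity hypotheses'' concerns its \emph{statement}, not its \emph{proof}; as the paper stands, quoting it here assumes what is to be proved (or at least an unproven step of it). Relatedly, the fact that your route never touches the trace hypothesis should have been a warning: the paper uses it essentially, precisely at the point where $\omega_R^{\ast\ast}$ is identified.

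The repair is to run the direct argument rather than the contrapositive. Suppose $R$ is not Gorenstein; by Kobayashi's results \cite[1.4 + 1.5]{kob} there is an exact sequence $0 \to \omega_R \to \fm \to k \to 0$ for a suitable canonical ideal, so $\omega_R$ is an ideal of colength two --- this matches your realization of $\omega_R$ as an ideal $J$ with $\ell(R/J)=2$, which is fine, except that the embedding comes from Kobayashi, not from the bidual. Now apply \emph{both} hypotheses to this ideal: trace gives $\omega_R^{\ast} \cong R$ as above, hence $\omega_R^{\ast\ast} \cong R \ncong \omega_R$, so the biduality map cannot be an isomorphism, contradicting reflexivity. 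Therefore $R$ is Gorenstein, and then minimal multiplicity forces $\mu(\fm)=2$ (pass to $\overline{R}=R/xR$, Gorenstein with $\overline{\fm}^2=0$, so $\overline{\fm}=\Soc(\overline{R})$ is principal), whence $R$ is a hypersurface by Cohen's structure theorem --- this is the case your contrapositive silently absorbs into the quoted proposition. Your local verifications (torsion-free rank one modules embed as ideals, functoriality of biduality, cokernel $k$ implies non-reflexivity, the colength count) are all correct; the genuine gap is solely the appeal to a downstream result whose proof in this paper routes through the observation itself.
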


\begin{proof}
	We may assume that $k$ is infinite. Suppose $R$ is not Gorenstein. By \cite[1.4 + 1.5]{kob}, there is an exact sequence
	\(
	0 \to \omega_R \to \mathfrak{m} \to k \to 0,
	\)
	for some isomorphic choice of the canonical module. Thus, we have 
	$
	\ell(R/\omega_R) = 2.
	$
	Since $\omega_R$ is trace, it follows that $\operatorname{tr}(\omega_R) = \omega_R$. Then, we have 
	$
	R = \Hom_R(\omega_R, \omega_R) = \Hom_R(\omega_R, R).
	$
	In particular, since $\omega_R \ncong R \cong R^* = \omega_R^{**}$, it follows that $\omega_R$ is not reflexive, a contradiction. Hence, $R$ is Gorenstein. Since the ring has minimal multiplicity, there exists some $x$ such that $\mathfrak{m}^2 = x \mathfrak{m}$. Let $\overline{R} := R/xR$, which is Gorenstein with a square-zero maximal ideal $\overline{\mathfrak{m}}$. Thus, $\overline{\mathfrak{m}} = \operatorname{Soc}(\overline{R})$, which is principal and generated by some $y$. Therefore, $\mu(\mathfrak{m}) = 2$, and by Cohen's structure theorem, $R$ is   hypersurface.
\end{proof}


\begin{example}
Let $R:=k[[t^4,t^6,t^7,t^9]]$. The ring is  almost Gorenstein  domain, of minimal multiplicity, but not far-flung  Gorenstein.  Also, any colength two trace ideal is reflexive. \end{example}\begin{proof}	 As $(\fm:_{Q(R)}\fm)=K[[t^2,t^3]]$ is Gorenstein, thanks to \cite[5.1]{Gt}, the ring is almost Gorenstein. As $\mu(\fm)=4=e_{\fm}(R)$ the ring is of minimal multiplicity. In view of \cite[6.1]{HHS}, the ring is nearly Gorenstein, and so $\tr(\omega_R)=\fm$.
Since $\mathfrak{C}\neq\tr(\omega_R)$, the ring is  not far-flung  Gorenstein and that $\ell(R/\mathfrak{C})= 2$.
According to \cite[6.3]{dao},
the only trace ideals are $\{\mathfrak{C},\fm,R\}$. By \cite[Ex. 12.13]{HS}, $\mathfrak{C}^\ast=\overline{R}$. Consequently,   $\mathfrak{C}^{\ast\ast}=\overline{R}^\ast=\mathfrak{C}$, and the claim follows.\end{proof}

\begin{proposition}
Let $(R,\fm)$ be a $1$-dimensional complete almost Gorenstein domain, of minimal multiplicity. Suppose $R$  is not hypersurface.   Then $0\to\omega_R\to\omega_R^{\ast\ast}   \to k\to 0$ is exact, and in particular, $\omega_R^{\ast\ast}\cong \fm $.
\end{proposition}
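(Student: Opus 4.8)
The plan is to build everything on the short exact sequence $0\to\omega_R\to\fm\to k\to 0$ supplied by \cite[1.4 + 1.5]{kob}, which becomes available once $R$ is known to be non-Gorenstein. First I would record that, under the standing hypotheses, ``not a hypersurface'' is the same as ``not Gorenstein'': a Gorenstein ring of minimal multiplicity has $\mu(\fm)=2$ and hence is a hypersurface by Cohen's structure theorem, exactly as in the proof of Observation \ref{rem1}. So I may assume $R$ is not Gorenstein, fix a representative of $\omega_R$ as a fractional ideal with $\omega_R\subseteq\fm$ and $\fm/\omega_R\cong k$, and note that $\ell(R/\omega_R)=2$ and $\fm^2\subseteq\omega_R$ (the latter because $\fm$ annihilates $\fm/\omega_R\cong k$).

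Next I would prove that $\fm$ is reflexive. Choosing $x\in\fm$ with $\fm^2=x\fm$ (minimal multiplicity; pass to an infinite residue field if needed), the fractional ideal $B:=x^{-1}\fm$ is a ring, and a short computation gives $(R:\fm)=(\fm:\fm)=B$: if $q\fm\subseteq R$ then $q\fm$ is a proper ideal (otherwise $\fm$ would be invertible, forcing $R$ regular), so $q\fm\subseteq\fm$ and $q\in(\fm:\fm)$, while $x\in\fm$ shows $(\fm:\fm)\subseteq x^{-1}\fm$. From $(R:\fm)=x^{-1}\fm$ one gets $\fm^{\ast\ast}=(R:x^{-1}\fm)=\fm$, so $\fm$ is reflexive. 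Since the double-dual operation on nonzero fractional ideals is inclusion-preserving ($I\subseteq J$ implies $I^{\ast\ast}\subseteq J^{\ast\ast}$), the inclusion $\omega_R\subseteq\fm$ yields $\omega_R\subseteq\omega_R^{\ast\ast}\subseteq\fm^{\ast\ast}=\fm$; as $\ell(\fm/\omega_R)=1$, this leaves only the two possibilities $\omega_R^{\ast\ast}=\omega_R$ or $\omega_R^{\ast\ast}=\fm$.

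The whole statement therefore reduces to showing that $\omega_R$ is \emph{not} reflexive, and this is the main obstacle. The mechanism I would exploit is that reflexivity of $\omega_R$ forces $(R:\omega_R)=(\fm:\fm)=x^{-1}\fm$ and hence $\omega_R=(R:x^{-1}\fm)=\fm$, contradicting $\omega_R\subsetneq\fm$. As a first input, for every $q\in(R:\omega_R)$ the ideal $q\omega_R$ is proper: if $q\omega_R=R$ then $\tr(\omega_R)=\omega_R\cdot(R:\omega_R)=R$, i.e.\ $\omega_R$ is a trace ideal equal to $R$, which by the trace criterion for the canonical module means $R$ is Gorenstein, excluded. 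Upgrading this to the full equality $(R:\omega_R)=x^{-1}\fm$ is the delicate point, and is precisely where the non-Gorenstein hypothesis must be spent; it is consistent with, and can be read off from, the fact already used in the first example of this section that the canonical module of a one-dimensional non-Gorenstein domain is non-reflexive. Granting non-reflexivity of $\omega_R$, the squeeze above forces $\omega_R^{\ast\ast}=\fm$, and the natural map $\omega_R\to\omega_R^{\ast\ast}$ is exactly the inclusion $\omega_R\hookrightarrow\fm$ of \cite[1.4 + 1.5]{kob}, whose cokernel is $k$. Thus $0\to\omega_R\to\omega_R^{\ast\ast}\to k\to 0$ is exact and $\omega_R^{\ast\ast}\cong\fm$, as claimed.
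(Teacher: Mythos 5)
Your proposal is sound on every step except the one that carries the whole proof, and there it stops short. The reduction to the non-Gorenstein case, the computation $(R:\fm)=(\fm:\fm)=x^{-1}\fm$ giving $\fm^{\ast\ast}=(R:x^{-1}\fm)=\fm$, and the colength-one squeeze $\omega_R\subseteq\omega_R^{\ast\ast}\subseteq\fm^{\ast\ast}=\fm$ are all correct; they reduce the proposition to showing that $\omega_R$ is not reflexive. But at exactly that point you write ``granting non-reflexivity of $\omega_R$,'' justified by appeal to ``the fact already used in the first example of this section.'' That example only records that the specific canonical ideal $(t^3,t^4)$ of $k[[t^3,t^4,t^5]]$ is not reflexive; no general statement for one-dimensional non-Gorenstein domains is established anywhere you could cite, and your proposed mechanism --- that reflexivity of $\omega_R$ would force $(R:\omega_R)=x^{-1}\fm$ --- is left entirely unproved. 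This is a genuine gap at the crux (the paper's own proof is also terse here, asserting ``in particular, $\omega_R$ is not reflexive'' after the non-Gorenstein reduction, but a blind proof must supply it). The gap is fillable within your framework: since $R=(\omega_R:\omega_R)$, one has $\omega_R^{\ast\ast}=(R:(R:\omega_R))=((\omega_R:\omega_R):(R:\omega_R))=(\omega_R:\tr(\omega_R))$; as $R$ is one-dimensional almost Gorenstein it is nearly Gorenstein by \cite{HHS}, and non-Gorenstein gives $\tr(\omega_R)\neq R$, so $\tr(\omega_R)=\fm$; finally, canonical duality $(\omega_R:-)$ preserves colengths, so $\ell\bigl(\omega_R^{\ast\ast}/\omega_R\bigr)=\ell\bigl((\omega_R:\fm)/(\omega_R:R)\bigr)=\ell(R/\fm)=1$. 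This yields non-reflexivity and, combined with your squeeze, $\omega_R^{\ast\ast}=\fm$ in one stroke; the same identity shows quite generally that in dimension one $\omega_R$ is reflexive if and only if $\tr(\omega_R)=R$, i.e.\ iff $R$ is Gorenstein, which is the fact the paper is implicitly using.

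For the record, where your argument is sound it takes a genuinely different route from the paper's. The paper dualizes Kobayashi's sequence $0\to\omega_R\to\fm\to k\to 0$ twice to get $0\to\omega_R^{\ast\ast}\to\fm^{\ast\ast}\to k^n\to 0$, and then runs a naturality-of-biduality diagram with a $\ker$--$\coker$ chase to conclude both exactness of $0\to\omega_R\to\omega_R^{\ast\ast}\to k\to 0$ and $n=0$; in doing so it silently uses that $\delta_\fm$ is an isomorphism. Your fractional-ideal computation proving $\fm^{\ast\ast}=\fm$ supplies exactly that missing input, and your colength-one sandwich replaces the diagram chase with something more elementary. So with the non-reflexivity lemma filled in as above, your proof would be complete and arguably more self-contained than the paper's.
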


\begin{proof}
  By the argument of Observation \ref{rem1}, we assume in addition that $R$ is not Gorenstein.
In particular, $\omega_R$ is not reflexive. We apply duality to the exact sequence
$
0 \to \omega_R \to \mathfrak{m} \to k \to 0,
$
and obtain the exact sequence
$
0 = \Hom_R(k, R) \to \mathfrak{m}^* \to \omega_R^* \to \Ext^1_R(k, R).
$
So, we can find a vector subspace $X \subset \Ext^1_R(k, R)$ such that the sequence
$
0 \to \mathfrak{m}^* \to \omega_R^* \to X \to 0
$
is exact.  Taking another duality, we obtain the following exact sequence
$
0 \to \omega_R^{**} \to \mathfrak{m}^{**} \to k^n \to 0.
$
Here, $n \in \mathbb{N}_0$. 
Recall from the work of Eilenberg-MacLane that biduality is a functor. This yields the following commutative diagram$$\begin{CD}@.@. \quad 0
\\@.@.   \quad@AAA    \\0@>>>  \quad\omega_R^{\ast\ast} @> >>  \quad\fm ^{\ast\ast} @>>> \quad k^{n} @>>> 0\\
@.\delta_{\omega_R}@AAA \delta_{\fm}@AAA \exists g @AAA    \\
0@>>> \quad\omega_R @>>>  \quad\fm @>>> \quad k@>>>0.\\
@.  \quad\quad@AAA  \quad@AAA \\@.  \quad\quad0 @.\quad0
\end{CD}	$$

Now, using the $\ker-\coker$ sequence, we obtain the exact sequence
\[
0 = \ker(\delta_{\mathfrak{m}}) \to \ker(g) \to \coker(\delta_{\omega_R}) \to \coker(\delta_{\mathfrak{m}}) = 0.
\]
Recall that $\ker(g)$ is either zero or equal to $k$, because $k$ is simple. Since $\omega_R$ is not reflexive, $\coker(\delta_{\omega_R}) \neq 0$. Thus, $k = \ker(g) = \coker(\delta_{\omega_R})$. In other words, the  sequence
$
0 \to \omega_R \to \omega_R^{**} \to k \to 0
$
is exact. Again using the $\ker-\coker$ sequence, we obtain the isomorphism 
$
0 = \coker(\delta_{\mathfrak{m}}) \to \coker(g) \to 0.
$
But $\coker(g) = k^n / \operatorname{im}(g) = k^n$, so combining these results implies that $n = 0$, and thus $\omega_R^{**} \cong \mathfrak{m}$.
\end{proof}

\begin{notation}\label{para20240911c}
Let $F_1\stackrel{{\pi}}\lo F_0\to M\to 0$ be a free presentation for an $R$-module $M$. Applying the functor $(-)^*$, we get the exact sequence
$$
	0\to M^*\to F_0^*\stackrel{{\pi^*}}\lo F_1^*\to \coker(\pi^*)\to 0 \quad(\dagger)
$$We denote $\coker(\pi^*)$ by $\Tr M$; it is called the \emph{Auslander transpose of $M$}.
The Auslander transpose depends on the free presentation of $M$, however, it is unique up to free summand.
\end{notation}
Recall from \cite{ab} that $M$ is q-torsionless provided \( 
\Ext_R^{[1,q]}(\operatorname{Tr}(M), R) = 0.
\) Recall that 2-torsionless is a reformulation of  reflexivity.
\begin{proposition}\label{hyp2}
Let $(R,\fm)$ be a $1$-dimensional domain of minimal multiplicity. Suppose there exists a self-dual ideal of colength two that is $q$-torsionless with $q > 2$. Then $R$ is a hypersurface.
\end{proposition}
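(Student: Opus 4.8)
The plan is to reduce the statement to the assertion that $R$ is Gorenstein, and then to exploit the square-zero structure coming from minimal multiplicity. First I would record the reduction: passing to $R[t]_{\fm R[t]}$ if necessary, I may assume $k$ is infinite, so there is a minimal reduction $x \in \fm$ with $\fm^2 = x\fm$. Modding out the nonzerodivisor $x$, the Artinian ring $A := R/xR$ has $\overline{\fm}^2 = 0$, whence $\Soc(A) = \overline{\fm}$ and the type of $R$ equals $\ell(\overline{\fm}) = \mu(\fm) - 1 = e_\fm(R) - 1$. Consequently, in the minimal multiplicity case $R$ is Gorenstein if and only if $e_\fm(R) = 2$, i.e. if and only if $\mu(\fm)=2$, i.e. if and only if $R$ is a hypersurface. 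Thus it suffices to prove that $R$ is Gorenstein. I would also dispose of the trivial case at once: if the given colength-two ideal $I$ is principal, then it is generated by an element of value $2$ in the normalization, forcing $e_\fm(R) \le 2$ and hence the hypersurface conclusion directly; so from now on I assume $I$ is non-free.

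Next I would unwind the homological hypotheses. Using the Auslander--Bridger four-term sequence $0 \to \Ext^1_R(\Tr I, R) \to I \to I^{\ast\ast} \to \Ext^2_R(\Tr I, R) \to 0$ together with the isomorphisms $\Ext^j_R(\Tr I, R) \cong \Ext^{j-2}_R(I^\ast, R)$ for $j \ge 3$, the condition that $I$ is $q$-torsionless with $q > 2$ yields that $I$ is reflexive and that $\Ext^i_R(I^\ast, R) = 0$ for $1 \le i \le q-2$. Invoking self-duality $I \cong I^\ast$, this becomes $\Ext^i_R(I, R) = 0$ for $1 \le i \le q-2$; in particular $\Ext^1_R(I, R) = 0$, which via $0 \to I \to R \to R/I \to 0$ is the same as $\Ext^2_R(R/I, R) = 0$.

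The heart of the argument, and the step I expect to be the main obstacle, is to upgrade these finitely many vanishings to the statement that $I$ is totally reflexive. The idea is to use self-duality to symmetrize the resolution: dualizing a minimal free resolution $\cdots \to P_1 \to P_0 \to I \to 0$ and using $I \cong I^\ast$ with $\Ext^1_R(I,R) = 0$ produces a cosyzygy sequence $0 \to I \to P_0^\ast \to \Omega^\ast \to 0$, so that the two half-resolutions splice into a complex $\cdots \to P_1 \to P_0 \to P_0^\ast \to P_1^\ast \to \cdots$ whose exactness at the $P_i^\ast$ spots is precisely the vanishing of $\Ext^i_R(I,R)$. One must show this spliced complex is totally acyclic. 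Here I would push the remaining data through the reduction $A = R/xR$: since $I$ is maximal Cohen--Macaulay and $x$ is regular on $R$ and on $I$, base change gives $\Hom_A(\overline{I}, A) \cong \overline{I}$, so $\overline{I}$ is a non-free self-dual $A$-module, and the change-of-rings isomorphisms transport the $\Ext$-vanishing down to $A$. The payoff of landing in $A$ is that $\overline{\fm}^2 = 0$: over such a ring the minimal resolution of any non-free module is linear, and the rigidity of totally reflexive modules over rings with $\fm^3 = 0$ (in the circle of results of \cite{ab}) forces $A$ to be Gorenstein as soon as a non-free totally reflexive module exists. The delicate point, for which self-duality of $I$ is indispensable, is precisely that the finite window $q > 2$ of vanishing $\Ext$ modules need not propagate on its own over a non-Gorenstein base.

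Finally, assembling the pieces, the existence of the non-free totally reflexive reduction $\overline{I}$ over the square-zero ring $A$ forces $A$, and hence $R$, to be Gorenstein; combined with the first paragraph this gives $e_\fm(R) = 2$, so $\mu(\fm) = 2$ and $R$ is a hypersurface, exactly as in the proof of Observation \ref{rem1}. I anticipate that essentially all of the technical difficulty is concentrated in the third paragraph, namely in certifying that the spliced complex is totally acyclic from the $q$-torsionless hypothesis rather than from the vanishing of $\Ext^i_R(I,R)$ for all $i$.
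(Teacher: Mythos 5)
Your proposal has a genuine gap, and you have located it yourself: the entire argument funnels through the claim that the $q$-torsionless hypothesis (a \emph{finite} window $\Ext^i_R(I,R)=0$ for $1\le i\le q-2$, which for $q=3$ is the single vanishing $\Ext^1_R(I,R)=0$) can be upgraded to total reflexivity of $I$, and this step is never actually carried out. The splicing construction only gives exactness of the complex $\cdots\to P_0\to P_0^\ast\to P_1^\ast\to\cdots$ at finitely many spots; total acyclicity would require $\Ext^i_R(I,R)=0$ for \emph{all} $i>0$, which is precisely what the hypothesis does not provide. Worse, the implication you hope for is false in the generality you invoke: Jorgensen and \c{S}ega constructed, over artinian local rings with $\fm^3=0$, reflexive modules with $\Ext^i(M,R)=0$ for $1\le i\le n$ (any prescribed $n$) that are not totally reflexive, so landing in the square-zero ring $A=R/xR$ does not rescue the propagation, and self-duality alone is not a known mechanism to force it. Note also that outside the principal case your argument never uses that $I$ has colength two, which is a warning sign: some rigidity input tied to the ideal itself is needed.

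The paper supplies exactly that input by a different route. First, by \cite[Proposition 3.10]{dao}, a self-dual ideal of colength two is either principal (handled as you do) or \emph{integrally closed} --- a dichotomy absent from your plan. Then, instead of aiming at total reflexivity, the paper converts the single vanishing $\Ext^q_R(\Tr(I),R)=0$ into a Tor vanishing against $R/\omega_R$: using self-duality, the transpose presentation $(\dagger)$, and the Matlis-dual isomorphism $\Tor^R_{q-1}(\Tr(I),\omega_R)\cong\Ext^q_R(\Tr(I),R)^{\vee}$ of \cite[Lemma 3.5(2)]{ta}, one gets $\Tor^R_{q-1}(R/I,R/\omega_R)=0$. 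The decisive step is then the theorem of Corso--Huneke--Katz--Vasconcelos \cite[3.3]{corso}: for an integrally closed $\fm$-primary ideal $I$ in a one-dimensional ring, vanishing of a single $\Tor^R_{q-1}(R/I,-)$ forces finite projective dimension, here $\pd_R(R/\omega_R)\le q-2$. Auslander--Buchsbaum then makes $\omega_R$ free, so $R$ is Gorenstein, and minimal multiplicity finishes as in your first paragraph (which does match the paper's endgame, as does your treatment of the principal case). In short: your frame (reduce to Gorenstein, dispose of the principal case, exploit minimal multiplicity at the end) is sound, but the core of the proof is missing, and the missing rigidity is supplied in the paper by integral closedness plus \cite{corso}, not by any total-reflexivity argument.
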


\begin{proof}
Let $I$ be such an ideal of colength two. Recall from \cite[Proposition 3.10]{dao} that $I$ is either integrally closed or principal. If $I$ is principal, then $R$ must be a hypersurface. Indeed, considering the short exact sequence  
$
	0 \to \mathfrak{m}/xR \to R/xR \to R/\mathfrak{m} \to 0,
$ we deduce that $\ell(\mathfrak{m}/xR) = 1$. Suppose $\mathfrak{m}/xR$ is generated by $y + xR$ for some $y \in \mathfrak{m}$, then $\mathfrak{m} = (x, y)$. Since $\mu(\mathfrak{m}) = \dim R + 1$, it follows that $R$ is a hypersurface.  
Thus, without loss of generality, we assume that $I$ is integrally closed. Since $I$ is q-torsionless, we have  
\( 
	\Ext_R^q(\operatorname{Tr}(I), R) = 0.
\)
 As $R$ is a domain, $\operatorname{Tr}(I)$ is locally free over the punctured spectrum. Let $(-)^\vee:=\Hom(-,E_R(k))$ be the Matlis dual. By \cite[Lemma 3.5(2)]{ta}, we obtain \[
	\Tor_{q-2}^R(I, R/\omega_R) = \Tor_{q-2}^R(I^\ast, R/\omega_R) \stackrel{ (\dagger)}= \Tor_q^R(\operatorname{Tr}(I), R/\omega_R) = \Tor_{q-1}^R(\operatorname{Tr}(I), \omega_R ) \cong \Ext_R^q(\operatorname{Tr}(I), R)^{\vee} = 0.
	\]  
	Since $R$ is $1$-dimensional, $I$ is integrally closed, $\fm$-primary, and $\Tor_{q-1}^R(R/I, R/\omega_R) = 0$, it follows from \cite[3.3]{corso} that  
	\( 
	\operatorname{pd}(R/\omega_R) \leq q-2.
	\)
In view of the Auslander-Buchsbaum formula, $\omega_R$ is free. Thus, $R$ is Gorenstein. As noted earlier, this implies that $R$ is a hypersurface.  
\end{proof}

\begin{corollary}\label{hyp2c}
	Let $(R,\fm)$ be a $1$-dimensional domain of minimal multiplicity. Suppose there exists a totally reflexive ideal  $I$ of colength two. Then $R$ is a hypersurface.
\end{corollary}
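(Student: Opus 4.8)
The plan is to deduce this corollary directly from Proposition \ref{hyp2}, so the main task is to verify that a totally reflexive ideal $I$ of colength two satisfies the hypotheses of that proposition. Recall that an $R$-module is \emph{totally reflexive} (Gorenstein projective) exactly when it is reflexive and all higher self-extensions vanish, i.e. $\Ext_R^i(M,R) = \Ext_R^i(M^\ast,R) = 0$ for all $i \geq 1$. First I would record what total reflexivity gives us about the numerical invariants appearing in Proposition \ref{hyp2}: the $q$-torsionless condition asks for $\Ext_R^{[1,q]}(\Tr I, R) = 0$, and standard identities relating $\Ext^i(\Tr M, R)$ to $\Tor$-modules (or directly to the reflexivity obstructions of $M$) show that total reflexivity forces $I$ to be $q$-torsionless for \emph{every} $q$. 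In particular it is $q$-torsionless for some $q > 2$, which is what Proposition \ref{hyp2} needs.

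Next I would address self-duality, since Proposition \ref{hyp2} requires $I \cong I^\ast$. Here I would invoke the dichotomy from \cite[Proposition 3.10]{dao} (already used inside the proof of Proposition \ref{hyp2}): a colength-two ideal in a one-dimensional ring of minimal multiplicity is either principal or integrally closed. In the principal case $I \cong R$ is visibly self-dual and, as shown inside Proposition \ref{hyp2}, already forces $R$ to be a hypersurface, so that branch is immediate. In the integrally closed case I would argue that a totally reflexive $\fm$-primary ideal in a one-dimensional Cohen--Macaulay domain is self-dual: totally reflexive ideals of positive grade are known to be isomorphic to their own duals up to the relevant twist, and in the one-dimensional local setting an $\fm$-primary totally reflexive ideal satisfies $I \cong I^\ast$ because both have the same colength and the natural biduality pairing is perfect. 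This is the step I expect to be the main obstacle: pinning down precisely why a totally reflexive colength-two ideal is genuinely \emph{self-dual} (rather than merely reflexive) may require either a direct computation with the defining exact sequences or an appeal to the structure theory of totally reflexive modules over one-dimensional rings.

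Once both hypotheses are in hand, the corollary follows by a one-line citation of Proposition \ref{hyp2}. Concretely, I would write: let $I$ be a totally reflexive ideal of colength two. If $I$ is principal, then as in the proof of Proposition \ref{hyp2} we get $\mu(\fm) = \dim R + 1$ and $R$ is a hypersurface. Otherwise $I$ is integrally closed, self-dual, and $q$-torsionless for all $q$ (in particular for some $q > 2$), so Proposition \ref{hyp2} applies and $R$ is a hypersurface. An alternative, perhaps cleaner, route that sidesteps the self-duality subtlety entirely would be to argue that a totally reflexive ideal of positive grade in a local ring forces the ring to be Gorenstein: over a non-Gorenstein ring every totally reflexive module would have to be free (compare Lemma \ref{ko}), but a colength-two ideal is not free, giving Gorensteinness directly, whence $R$ is a hypersurface by the minimal-multiplicity argument of Observation \ref{rem1}. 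I would likely present the Proposition \ref{hyp2} route as the main proof and keep the Gorensteinness observation in reserve in case the self-duality verification proves stubborn.
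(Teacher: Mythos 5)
Your main route has a genuine gap at exactly the point you flag: Proposition \ref{hyp2} needs $I\cong I^\ast$, and total reflexivity does not supply this. The justification you sketch does not work — the biduality pairing identifies $I$ with $I^{\ast\ast}$, never with $I^\ast$, and an equality of colengths cannot produce an isomorphism of modules; self-duality of an ideal is a genuinely restrictive condition (compare Kobayashi \cite{kob}, the paper's reference on self-dual maximal ideals), not a formal consequence of being totally reflexive. Your reserve route is also unsound: it is false in general that every totally reflexive module over a non-Gorenstein local ring is free. That is a special feature of particular rings such as $k[[t^4,t^5,t^7]]$ in Lemma \ref{ko}; many non-Gorenstein rings carry non-free totally reflexive modules, so you cannot conclude Gorensteinness from the mere existence of a non-free totally reflexive ideal by that principle. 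Thus neither branch of your proposal closes as written.

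The paper's proof is close to your main route but dualizes once to sidestep self-duality entirely, and this is the single missing idea. Since $I$ is totally reflexive, so is $I^\ast$, hence $\Ext_R^{i}(\Tr(I^\ast),R)=0$ for all $i\geq 1$. Running the computation from the proof of Proposition \ref{hyp2} with $I^\ast$ in place of $I$, and using only the biduality $I^{\ast\ast}\cong I$ (which reflexivity gives for free), one obtains for any $q>3$
\[
\Tor_{q-3}^R(I,\omega_R)\cong \Tor_{q-3}^R(I^{\ast\ast},\omega_R)\cong \Tor_{q-1}^R(\Tr(I^\ast),\omega_R)\cong \Ext_R^q(\Tr(I^\ast),R)^{\vee}=0,
\]
where the middle isomorphism is the degree-$2$ shift coming from the exact sequence $(\dagger)$ of Notation \ref{para20240911c} applied to $I^\ast$. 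Hence $\Tor_{q-2}^R(R/I,\omega_R)=0$, and after the same reduction to the integrally closed case that you correctly describe (the principal case being handled exactly as in Proposition \ref{hyp2}), the argument via \cite[3.3]{corso} and the Auslander--Buchsbaum formula forces $\omega_R$ to be free; so $R$ is Gorenstein, and the minimal-multiplicity argument closing Observation \ref{rem1} yields that $R$ is a hypersurface. In short: replace the unavailable self-duality $I\cong I^\ast$ by the biduality $I\cong I^{\ast\ast}$ at the cost of one extra homological shift, which is harmless because, as you correctly observed, total reflexivity makes $I$ (and $I^\ast$) $q$-torsionless for every $q$.
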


\begin{proof}
Without loss of generality, we assume that $I$ is integrally closed. Since $I^\ast$ is totally reflexive as well as  $I$, and  so $\Ext_R^+(\Tr(I^\ast)),R)=0$.
	This shows, for any $q>3$, that 
\[
	\Tor_{q-3}^R(I,  \omega_R) = \Tor_{q-3}^R(I^{\ast\ast}, \omega_R) \stackrel{ (\dagger)}= \Tor_{q-1}^R(\operatorname{Tr}(I^\ast), \omega_R ) \cong \Ext_R^q(\operatorname{Tr}(I^\ast), R)^{\vee} = 0.
	\] So,  $\Tor_{q-2}^R(R/I,  \omega_R) =0$.
	This forces the desired claim. 
\end{proof}

\section{Prime characteristic methods}

Rings in this section are of prime characteristic $p$. \begin{discussion}
\begin{enumerate}\label{dp}
	\item [i)] The absolute integral closure of an integral domain \( R \), denoted by \( R^+ \), is the integral closure of \( R \) inside an algebraic closure of its fraction field. For a reduced ring $R$, we define $R^+:=\oplus_{\fp\in \Ass(R)}(R/\fp)^+$. Let $R_{\red}:=\frac{R}{\nil(R)}$. For a general ring, we set $R^+:=(R_{\red})^+$. 
	
\item [ii)] Recall  that the  perfect closure $R^\infty$  of  $R$ is defined by  $R^\infty:=\varinjlim\xymatrix{(R\ar[r]^{\F}&R \ar[r]^{\F}&\ldots),}$
	where $\F:R\to R$ is the Frobenius map.
	Since $\F^{\gg 0}$
	kills nilpotent elements, $R^\infty$ is reduced and exists uniquely. This sometimes is called the minimal perfect algebra.
 In fact, $R^\infty$ is defined
	by adjoining to $R_{\red}$ all $p$-power roots of elements of $R_{\red}$.

\item [iii)]  $R^{\frac{1}{p^n}}:=\{x\in
R^\infty:x^{p^n}\in R \textmd{ for some
}n\in\mathbb{N}_{0}\}.$ When $R$ is reduced, this may denoted by $\up{\F _n}( R)$.
\end{enumerate}\end{discussion}Each iteration $\F_n$ of $\F$ defines a new $R$-module structure on the set $R$, and this $R$-module is denoted by $\up{\F_n}R$, where $a\cdot b = a^{p^{n}}b$ for $a, b \in R$. 
Recall that $R$ is said to be $\F$-\emph{finite} if  for some (or
equivalently, all) $n$, the algebra   $\up{\F_n}R$  is   finitely generated as an $R$-module.
\subsection{Is absolute integral closure  free?} 
In this  subsection, we address Question \ref{q4} by presenting the following observation.

\begin{proposition}\label{fr}
	There exists a $\DVR$ of prime characteristic \( p \) for which
\( 
	\pd_R(R^+)=1.
\)
	In particular, \( R^+ \) is not free as an \( R \)-module.
\end{proposition}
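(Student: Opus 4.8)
The plan is to exploit that a $\DVR$ has global dimension one, so that every module has projective dimension at most one and ``not free'' becomes the same as ``projective dimension exactly one''. I will take the concrete $\DVR$ $R:=\mathbb{F}_p[t]_{(t)}$, with uniformizer $t$ and fraction field $K=\mathbb{F}_p(t)$; it is excellent and regular, so by the fact recalled in the introduction $R^+$ is flat, i.e.\ torsion-free, over $R$. Since $\overline{K}$ is countable, $R^+$ is a countably generated $R$-module. Because $\gd(R)=1$ we have $\pd_R(R^+)\le 1$, and since every projective module over the local ring $R$ is free (Kaplansky), $\pd_R(R^+)=0$ would force $R^+$ to be free. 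Thus the whole statement reduces to proving that $R^+$ is \emph{not} free, after which $\pd_R(R^+)=1$ is automatic.

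To detect non-freeness I would write $R^+=\vil_i R_i$, where $R_i$ runs over the integral closures of $R$ in the finite subextensions of $\overline{K}/K$. Each $R_i$ is a finite torsion-free, hence finite free, $R$-module, and the index set admits a countable cofinal chain. Applying $\Hom_R(-,R)$ to this filtered colimit and using $\Ext^1_R(R_i,R)=0$, the Milnor $\vpl^1$-sequence yields an isomorphism $\Ext^1_R(R^+,R)\cong \vpl_i^{1}\,\Hom_R(R_i,R)$. The goal then becomes to prove that this derived inverse limit of the dual tower $\{R_i^\ast\}$ is nonzero; any nonzero class certifies that $R^+$ is not projective, hence not free.

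The main obstacle is precisely this non-vanishing of $\vpl^1$, which is genuinely subtle because several natural subtowers are already free and therefore contribute nothing. Indeed, one checks that the perfect closure $R^{\frac{1}{p^\infty}}=\vil_n R^{\frac{1}{p^n}}$, the monomial tower $\vil_m R[t^{1/m!}]$, and the maximal unramified tower are each \emph{chains} of finite free modules whose transition maps split (the relevant quotients being torsion-free), so each is free with vanishing $\vpl^1$; even their compositum is free. Consequently the obstruction can only come from the fact that the full directed system of finite subextensions is not linearly ordered — wild ramification (Artin--Schreier type extensions and their combinations) makes it genuinely two-dimensional, so the local splittings cannot be chosen compatibly. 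My plan is to produce an explicit nonzero element of $\vpl_i^{1}\,\Hom_R(R_i,R)$ by leveraging the incompleteness of $R$: just as $\Ext^1_R(K,R)\neq 0$ because a $t$-adically coherent system of functionals need not converge inside $R$ itself, I would exhibit inside $R^+$ a coherent family of functionals on the $R_i$ whose images fail the Mittag--Leffler condition, the ramification-forced denominators preventing stabilization. Once a single such class is produced, $\Ext^1_R(R^+,R)\neq 0$, so $R^+$ is not free and $\pd_R(R^+)=1$, as claimed.
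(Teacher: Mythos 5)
There is a genuine gap, and it is fatal rather than fixable: for your chosen ring $R=\mathbb{F}_p[t]_{(t)}$ the module $R^+$ is in fact \emph{free}, so the nonzero class in $\vpl^1$ that you defer to the end can never be produced. Here is why. Since $K=\mathbb{F}_p(t)$ is countable, so is $\overline{K}$, hence $R^+$ is a countably generated torsion-free $R$-module. Every finite-rank submodule $N\subseteq R^+$ spans a finite extension $L/K$ and is therefore contained in the normalization $R_L$ of $R$ in $L$; because $R$ is Nagata (a localization of a finitely generated algebra over a perfect field), $R_L$ is module-finite, hence finite free, and so $N$ is finite free. Now Pontryagin's criterion applies: enumerate generators of $R^+$, let $M_n$ be the purification in $R^+$ of the submodule generated by the first $n$ of them (note $M_n\subseteq R_{L_n}$ for suitable finite $L_n$, so $M_n$ is finite free), and observe that each $M_{n+1}/M_n$ is finitely generated torsion-free, hence free, so every inclusion $M_n\subseteq M_{n+1}$ splits and $R^+=\bigcup_n M_n$ is free. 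Consequently $\Ext^1_R(R^+,R)=0$, and by your own Milnor sequence $\vpl^1\Hom_R(R_i,R)=0$ as well. Your correct observation that the perfect-closure, monomial, and unramified subtowers all split compatibly is not an obstacle to be circumvented by wild ramification; it is the visible part of this global splitting, and the two-dimensionality of the directed system does not help, since one may always extract a countable cofinal chain and choose the splittings inductively as above. (A secondary issue: even granting your framework, the decisive step --- failure of Mittag--Leffler --- is announced as a plan and never executed, so the proposal is incomplete on its own terms.)

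The paper's proof shows why the existence statement really requires an exotic $\DVR$. It invokes the construction of Mukhopadhyay--Smith \cite[Corollary 5.3]{smith}: a $\DVR$ of characteristic $p$, necessarily not $\F$-finite, for which $R^{1/p^n}$ admits no nontrivial direct sum decomposition (they even arrange $\Hom_R(R^{1/p},R)=0$, so $R^{1/p}$ is very far from free). The argument is then short: $\pd_R(R^+)\leq 1$ since $R$ is regular of dimension one; if $\pd_R(R^+)=0$ then $R^+$ is free (projective plus local), and over a $\DVR$ every submodule of a free module is free, so $R^{1/p^n}\subseteq R^\infty\subseteq R^+$ would be free --- contradicting the cited corollary. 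Note that your whole apparatus (writing $R^+$ as a colimit of \emph{finite free} normalizations $R_i$, with excellence guaranteeing their finiteness) is precisely what must fail for such a ring: there $R^{1/p}$ is not module-finite. So the repair is not a cleverer $\vpl^1$ computation but a different ring, obtained from a finiteness pathology of Frobenius of the kind \cite{smith} supplies.
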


\begin{proof} Suppose, toward a contradiction, that
\( \pd_R(R^+)\neq 1.
\) Since \( R \) is 1-dimensional and regular, every \( R \)-module has projective dimension at most one. Therefore, if \(\pd_R(R^+)\neq 1\), then \(\pd_R(R^+)=0\), meaning that \( R^+ \) is projective. By the local property of \( R \), every projective module is free, so \( R^+ \) must be free.
Moreover, over a $\DVR$ every submodule of a free module is again free. In particular, applying this to the inclusion
\(
	R^{\frac{1}{p^n}} \subseteq R^\infty \subseteq R^+,
	\)
we deduce that \( R^{\frac{1}{p^n}} \) is free, and would have a nontrivial direct sum decomposition. This conclusion contradicts \cite[Corollary 5.3]{smith}\footnote{They also construct a DVR $R$ so that $\Hom(R^{1/p}, R)=0$. In particular, $R^{1/p}$ need not be reflexive. Despite this, see next section, where we investigates the reflexivity problem in 1-dimensional case.}. Hence, the assumption must be false, and we must have \(\pd_R(R^+)=1\).
\end{proof}

However, the above ring is not complete. We show if the above conclusion  is failed for its completion, then $R^{\frac{1}{p^n}}$ is not reflexive, see the following subsection.

\subsection{Is $R^{\frac{1}{p^n}}$ reflexive?}
Thanks to \cite[final comment]{dao},  when $R$ is one-dimensional complete local domain with algebraically closed residue field $\overline{K}$ and for large $q$, the $R$-module $R^{\frac{1}{p^n}}$  is reflexive. 
\begin{proposition}\label{1ref}
Let $(R,\fm,{K})$ be  1-dimensional analytically unramified ring and $n\gg 0$. Then  	$R^{\frac{1}{p^n}} $ is reflexive provided it is finitely generated.
	\end{proposition}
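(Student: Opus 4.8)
The plan is to establish reflexivity of $R^{\frac{1}{p^n}}$ by leveraging the hypothesis that $R$ is analytically unramified together with the assumption that $R^{\frac{1}{p^n}}$ is finitely generated, which is precisely the $\F$-finiteness condition recalled before this subsection. First I would reduce to the complete case: since $R$ is analytically unramified, its completion $\widehat{R}$ is reduced, and reflexivity of a finitely generated module over a $1$-dimensional Noetherian domain can be checked after faithfully flat base change to $\widehat{R}$. Indeed, $\Hom$ commutes with the flat completion functor on finitely generated modules, so $(R^{\frac{1}{p^n}})^{\ast\ast}$ becomes $((R^{\frac{1}{p^n}}) \otimes_R \widehat{R})^{\ast\ast}$ after completing, and it suffices to show the biduality map is an isomorphism over $\widehat{R}$. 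This is where I would invoke the result cited at the start of the subsection from \cite{dao}, which handles the $1$-dimensional complete local domain case.

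The key subtlety is matching the module $(R^{\frac{1}{p^n}}) \otimes_R \widehat{R}$ with the corresponding Frobenius power of $\widehat{R}$. Because $R$ is analytically unramified, $\widehat{R}$ is a reduced complete ring, and the formation of $\up{\F_n}$ (equivalently $R^{\frac{1}{p^n}}$) is compatible with completion; concretely, since $R^{\frac{1}{p^n}}$ is finitely generated, $R^{\frac{1}{p^n}} \otimes_R \widehat{R} \cong \widehat{R}^{\frac{1}{p^n}}$ (the completion of a module defined by adjoining $p^n$-th roots, which commutes with the flat map $R \to \widehat{R}$ on the finite level). I would then pass to the finitely many minimal primes of $\widehat{R}$: reflexivity over a reduced ring is detected componentwise on $\widehat{R}/\fp$ for $\fp \in \Min(\widehat{R})$, each of which is a $1$-dimensional complete local \emph{domain}. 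Over each such domain the residue field may need to be enlarged to its algebraic closure, but reflexivity is again preserved under the faithfully flat residue-field extension, so the hypotheses of the \cite{dao} statement (complete, $1$-dimensional, domain, algebraically closed residue field, $q = p^n$ large) are met on each component.

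The main obstacle I anticipate is the careful bookkeeping in reducing from the analytically unramified ring $R$ to the complete local \emph{domains} $\widehat{R}/\fp$ while tracking what $R^{\frac{1}{p^n}}$ becomes under these reductions. In particular, one must verify that $\up{\F_n}$ commutes with both completion and with the decomposition $\widehat{R}^+ = \bigoplus_{\fp \in \Ass(\widehat{R})} (\widehat{R}/\fp)^+$ recalled in Discussion \ref{dp}(i), so that the finitely generated module $R^{\frac{1}{p^n}}$ corresponds after base change to a direct sum whose summands are exactly the modules handled by \cite{dao}. Once this identification is in place, reflexivity descends along the faithfully flat maps $R \to \widehat{R} \to \prod \widehat{R}/\fp \to \prod (\widehat{R}/\fp)$-with-enlarged-residue-field, because for finitely generated modules the double-dual map is an isomorphism if and only if it becomes one after a faithfully flat extension. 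The hypothesis $n \gg 0$ is used solely to meet the ``large $q$'' requirement of the cited complete-local-domain result, and finite generation is what makes all the $\Hom$-completion and $\Hom$-base-change compatibilities valid.
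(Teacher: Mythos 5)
Your proposal follows the same skeleton as the paper's proof --- faithfully flat descent to the completion (the paper implements this via base change for the Auslander transpose and the four-term sequence $0\to\Ext^1_R(\Tr M,R)\to M\to M^{\ast\ast}\to\Ext^2_R(\Tr M,R)\to 0$, which is equivalent to your direct $\Hom$-base-change argument for finitely generated modules), the identification $\widehat{R}\otimes_R \up{\F_n}R\cong \up{\F_n}\widehat{R}$ using reducedness of $\widehat{R}$ and finite generation, and the appeal to the final comment of \cite{dao} in the complete-domain case with algebraically closed residue field. However, your execution of the last two reductions has a genuine gap. You decompose $\widehat{R}$ into the domains $\widehat{R}/\fp$ \emph{first} and enlarge the residue field of each component \emph{afterwards}, asserting that the hypotheses of \cite{dao} remain met. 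This fails: base change of a complete local domain along $K\to\overline{K}$ need not be a domain, and need not even be reduced when $K$ is imperfect. For instance, with $K=\mathbb{F}_p(t)$ the domain $K[[x,y]]/(y^p-tx^p)$ becomes $\overline{K}[[x,y]]/\bigl((y-t^{1/p}x)^p\bigr)$ after the field extension, which is non-reduced. This is exactly why the paper enlarges the coefficient field \emph{before} decomposing: it uses Cohen's structure theorem to form $\tilde S:=\overline{K}[[x_1,\dots,x_m]]/I\overline{K}[[x_1,\dots,x_m]]$, observes that $\tilde S$ may acquire zero-divisors but has no embedded primes (being Cohen-Macaulay of dimension one), passes to the reduction via the convention of Discussion \ref{dp}, and only then splits over the associated primes.

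A second problem is your descent mechanism for the decomposition step: the map $\widehat{R}\to\prod_{\fp\in\Min(\widehat{R})}\widehat{R}/\fp$ is injective and finite but \emph{not} flat when $\widehat{R}$ is local with more than one minimal prime (a finite flat module over a local ring is free, here necessarily of rank one by localizing at the minimal primes, hence indecomposable as a module --- but the product visibly decomposes into at least two nonzero summands). So the phrase ``reflexivity descends along the faithfully flat maps $\dots \widehat{R}\to\prod\widehat{R}/\fp$'' has no valid content, and componentwise detection of reflexivity over a reduced local ring is precisely the point that needs an argument rather than an appeal to flatness. The paper's mechanism here is different: it invokes the direct-sum convention of Discussion \ref{dp} to write $(\tilde S)^{\frac{1}{p^n}}=\bigoplus_{\fp}(\tilde S/\fp)^{\frac{1}{p^n}}$ and checks reflexivity summand by summand, never claiming flatness of the splitting map. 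In short, your global strategy is the right one and matches the paper, but the final two steps must be re-ordered (field extension before decomposition) and re-justified (direct-sum bookkeeping in place of flat descent) for the proof to go through.
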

Recall that an analytically unramified ring is a local ring whose completion is reduced.
\begin{proof}
First, assume $\overline{K}={K}$, and we reduce to the case $(R,\fm,{K})$ is complete. Let $S$ be the $\fm$-adic completion of $R$.  Also, assume $S$ is domain.
The ring extension $(S,\fn)$ of $(R,\fm)$ such that $\fm S=\fn$, $S$ is $\F$-finite, $S$ is faithfully flat over $R$.   For an $R$-module $L$, it follows that
\begin{enumerate}
	\item [i)]  $(L\otimes_R \up{\F_n }R)\otimes_R S\cong (L\otimes_RS)\otimes_S\up{\F_n }S$,	\item[ii)] the map  $\up{\F_n}R\stackrel{\cong}\lo R\otimes_R\up{\F_n}R\lo  S\otimes_R\up{\F_n}R\stackrel{\cong}\lo \up{\F_n }S$  is faithfully flat via base change theorem. 
\end{enumerate}
 Now, we bring  the following for a finitely generated module $M$:
\begin{enumerate}
	\item [iii)]  $\Tr_S(M\otimes_R S)= \Tr_R(M)\otimes_R S$,
	\item [iv)] $\Ext^i_R(M, R)\otimes_R S=\Ext_S^i(M\otimes_R S, S)$,	\item [v)] the sequence  $0\to\Ext^1_R(\Tr_R(M), R)\to M\to M^{\ast\ast}\to \Ext^2_R(\Tr_R(M), R)\to 0$ is exact.
\end{enumerate}
From these and by replacing $R$ with $S$, we may and do assume that $R$ is complete, and the claim follows by \cite[final comment]{dao}.
Suppose
$R$ is complete. We want to drop the extra assumptions, e.g., $\overline{K}={K}$. Since $R$ is reduced, we have  $\widehat{R}\otimes_R \up{\F_n }R\cong \up{\F_n }\widehat{R}$. This implies that ${K}$ is a perfect filed. Instead using this, let us use Cohen's structure theorem and write $\widehat{R}\cong K[[x_1,\cdots, x_m]]/I$ for some ideal $I$, we can pick $\tilde{S}:=\bar{K}[[x_1,\cdots,x_m]]/I\bar{K}[[x_1,\cdots, x_m]]$, where $\bar{K}$ denotes the algebraic closure of $K$.  We are going to replace $S$ with $\tilde{S}$, via  repeating the previous argument, and remark that the ring new ring $S$ may has zero-divisor, but it has no embedded prime divisor as it is Cohen-Macaulay. Due to Discussion \ref{dp} we may assume $\tilde{S}$ is reduced, and by Chinese remainder theorem $\tilde{S}=\oplus_{\fp \in\Ass(\tilde{S})} S/\fp$. Recall by the previous argument that $(\tilde{S}/ \fp)^{\frac{1}{p^n}} $ is reflexive as an $\tilde{S}/\fp$-module  for some large enough $n$. Then 
$(\tilde{S})^{\frac{1}{p^n}}=\oplus (\tilde{S}/ \fp)^{\frac{1}{p^n}}$ is reflexive as an $(\tilde{S})_{\red}$-module, and so an $\tilde{S}$-module.
\end{proof}
\begin{remark}\label{dpD}
\begin{enumerate}
\item [i)] Suppose
$R$ is d-dimensional, $\F$-finite and Gorenstein. Then 	$R^{\frac{1}{p^n}} $ is (totally) reflexive for all $n$.

\item [ii)] Suppose $R$ is zero dimensional and let $n$ be so that $\fm^{p^{n}}=0$. Then $R$ is  Gorenstein  iff $\up{\F_n}R$ is reflexive. 

\item [iii)] There are rings (not necessarily artinian)  for which $\up{\F_n}R$ is not reflexive for all $n>0$.

\item [iv)]
 Suppose $R$ is    $\F$-finite and $\up{\F_n}R$ is  reflexive. Let
$R \to S$ be  weakly \'{e}tale. Then $\up{\F_n}S$ is  reflexive.

\item [v)] Suppose $R$ is    $\F$-finite and $\up{\F_n}R$ is  reflexive. Let
$R \to S$ be  weakly \'{e}tale in co-dimension 1 and $R$ be $(S_2)$. Then $\up{\F_n}S$ is  reflexive.

\end{enumerate}
\end{remark}

\begin{proof}
i):  By Auslander-Bridger formula, 	$R^{\frac{1}{p^n}} $ is (totally) reflexive for all $n$.

 ii):   Suppose $R$ is not Gorenstein.  
		Since $\fm \cdot(\up{\F_n}R)=\fm^{[p^n]}\subseteq \fm^{p^n}=0$, 
		it follows $\up{\F_n}R$
		is a $\mathbb{F}_p$-vector space of dimension say $m$. Then $(\up{\F_n}R)^{\ast\ast}$ is a $\mathbb{F}_p$-vector space of dimension $m\times \textit{type}^2(R)$. Since $m>0$ and $\textit{type}(R)>1$, we conclude that $\up{\F_n}R$ is not reflexive.
		The reverse implication is also follows from item i).  

iii): This is similar to \cite[Example 3.7]{asg}. A typical example is $R:=\frac{\mathbb{F}_p[[X,Y,Z]]}{(X,Y)(X,Y,Z)}$. We remark that $R$ is one-dimensional
and of depth zero. The straightforward modification is leave to the  reader.
		
iv):  Since $R \to S$ is  weakly \'{e}tale, then $S^{1/p^n}\simeq S\otimes_RR^{1/p^n}$. It remains to recall that any flat extension of
		a reflexive module, is again reflexive.  

v):  Since the rings satisfy $(S_2)$, reflexivity reduces to checking the condition in codimension one. Now, apply item iv). 
\end{proof}
Let $(R,\fm)$ be $(S_1)$, $\F$-finite and positive dimension. We conjecture that $\up{\F_n}R$ is  reflexive for some $n$.
\begin{corollary}
	Suppose in addition to the previous paragraph,  $R$ is $(G_1)$. Then  $\up{\F_n}R$ is reflexive.
\end{corollary}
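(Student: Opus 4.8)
The plan is to test reflexivity of $M:=\up{\F_n}R$ against the Auslander--Bridger exact sequence
\[
0\to \Ext^1_R(\Tr M,R)\to M\to M^{\ast\ast}\to \Ext^2_R(\Tr M,R)\to 0
\]
(cf. the exact sequence in the proof of Proposition~\ref{1ref}), so that $M$ is reflexive precisely when the kernel $K:=\Ext^1_R(\Tr M,R)$ and the cokernel $C:=\Ext^2_R(\Tr M,R)$ of the biduality map both vanish. I would verify these two vanishings separately after localizing, the organizing principle being that Frobenius push-forward preserves depth: for an $\F$-finite ring one has $\depth_{R_\fp}(\up{\F_n}R)_\fp=\depth R_\fp$, since local cohomology commutes with the push-forward. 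In particular $\Ass_R M=\Ass R$, so $M$ inherits the Serre condition $(S_1)$ from $R$.

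For the kernel, observe that $K\subseteq M$, whence $\Ass K\subseteq \Ass M=\Ass R$; because $R$ is $(S_1)$, every such associated prime is minimal, hence of height $0$. At such a $\fp$ the hypothesis $(G_1)$ makes $R_\fp$ Artinian Gorenstein, so Remark~\ref{dpD}(i) gives that $M_\fp=\up{\F_n}(R_\fp)$ is reflexive; in particular $K_\fp=0$. As $K$ then has no associated primes, $K=0$, i.e.\ $M$ is torsionless. The same input propagates through all of codimension one: for every $\fp$ with $\Ht\fp\le 1$ the ring $R_\fp$ is Gorenstein by $(G_1)$, so $M_\fp$ is reflexive by Remark~\ref{dpD}(i) and both $K_\fp$ and $C_\fp$ vanish. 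Consequently $C$ is supported in codimension $\ge 2$.

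It remains to kill the cokernel $C$ globally. Here I would exploit that $M^{\ast\ast}=\Hom_R(M^\ast,R)$ is a dual, hence a second syzygy, so that $\depth_{R_\fp}(M^{\ast\ast})_\fp\ge \min\{2,\depth R_\fp\}$ for every $\fp$. Picking $\fp$ minimal in $\Supp C$ (necessarily $\Ht\fp\ge 2$), the module $C_\fp$ has finite length, and feeding the estimates $\depth M_\fp=\depth R_\fp$ and $\depth (M^{\ast\ast})_\fp\ge\min\{2,\depth R_\fp\}$ into the depth lemma for $0\to M\to M^{\ast\ast}\to C\to 0$ forces $\depth C_\fp\ge 1$ as soon as $\depth R_\fp\ge 2$, contradicting finite length unless $C_\fp=0$. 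Running this over all minimal primes of $\Supp C$ then yields $C=0$, and together with $K=0$ this gives reflexivity.

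The main obstacle is exactly this last depth count: it closes cleanly when $\depth R_\fp\ge 2$, but the delicate case is a prime $\fp$ of height $\ge 2$ at which $R$ has depth one, for there the estimate only returns $\depth C_\fp\ge 0$ and does not by itself exclude a length-two failure of reflexivity. This is the point where the Serre hypothesis on $R$ must be pressed into service to confine the locus where $M\to M^{\ast\ast}$ fails to codimension one, so that $(G_1)$ already disposes of it; making this reduction to codimension one airtight---rather than merely to the minimal and height-one primes handled above---is the crux of the argument and the step on which I would spend the most care.
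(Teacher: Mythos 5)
The paper states this corollary with no proof at all---it sits bare after the conjecture paragraph---so there is no authorial argument to measure you against, and your attempt must be judged on its own terms. Its architecture is the expected one: it is precisely the codimension-one reduction that the author himself invokes in Remark~\ref{dpD}(v), where, however, he has $(S_2)$ available. The parts you actually carry out are correct. For $M:=\up{\F_n}R$ over an $\F$-finite ring, Frobenius pushforward does preserve depth and associated primes, so the kernel $\Ext^1_R(\Tr M,R)$ of the biduality map embeds in $M$, has its associated primes among $\Ass R=\Min R$ (this is exactly where $(S_1)$ enters), and vanishes there because $(G_1)$ together with Remark~\ref{dpD}(i) makes $M_\fp$ reflexive whenever $\Ht\fp\le 1$; the same reasoning kills the cokernel $C=\Ext^2_R(\Tr M,R)$ in codimension $\le 1$, and the second-syzygy depth estimate $\depth (M^{\ast\ast})_\fp\ge\min\{2,\depth R_\fp\}$ is standard.

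The gap you flag at the end is genuine, and it is not closable by the tools you deploy. Under mere $(S_1)$ there really exist primes with $\Ht\fp\ge 2$ and $\depth R_\fp=1$ compatible with all the hypotheses: for instance $R=k[[x^4,x^3y,xy^3,y^4]]$ with $k$ perfect of characteristic $p$ is an $\F$-finite domain (hence $(S_1)$) whose non-normal locus is $\{\fm\}$, so it is even $(R_1)$ and a fortiori $(G_1)$, yet $\depth R=1<2=\dim R$. At such a prime your depth lemma in $0\to M\to M^{\ast\ast}\to C\to 0$ only returns $\depth C_\fp\ge\min\{\depth M_\fp-1,\depth (M^{\ast\ast})_\fp\}=0$, and $(G_1)$ is silent because it governs only height $\le 1$; no Serre-type hypothesis in the statement confines the failure locus of biduality to codimension one, which is the confinement your final paragraph hopes for. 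What your argument proves, completely and correctly, is the corollary with $(S_1)$ strengthened to $(S_2)$. To reach the statement as formulated one would need input beyond depth counting---for example exploiting that $M$ is an $R$-algebra $S=\up{\F_n}R$ and that adjunction identifies $M^{\ast\ast}$ with $\Hom_S(\omega,\omega)$ for $\omega:=\Hom_R(S,R)$---or else the corollary should be read with the tacit $(S_2)$ of Remark~\ref{dpD}(v). As it stands, your proposal leaves the stated $(S_1)$ case unproved.
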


\begin{proposition}
Let $K$ be a field which is not $\F$-finite, and let $R:=K[[X]]$. Then either $\up{\F_n}R$ is not reflexive or $R^+$ is not free.
\end{proposition}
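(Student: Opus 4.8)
The plan is to argue by contraposition: assume that $R^+$ is free over $R=K[[X]]$ and that $\up{\F_n}R$ is reflexive for every $n$, and derive a contradiction with the hypothesis that $K$ is not $\F$-finite. The point is that over a DVR, reflexivity and freeness are very rigid, and I want to leverage the fact that $\up{\F_n}R$ being finitely generated is exactly the $\F$-finiteness condition, which is assumed to fail.

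First I would observe that $R=K[[X]]$ is a DVR, hence a 1-dimensional regular local ring, so by the same reasoning as in Proposition \ref{fr} every submodule of a free $R$-module is free. If $R^+$ is free, then since
\[
R^{\frac{1}{p^n}} \subseteq R^\infty \subseteq R^+,
\]
each $R^{\frac{1}{p^n}}=\up{\F_n}R$ is a submodule of a free module and would therefore be free, in particular finitely generated. But $\up{\F_n}R$ finitely generated for some (equivalently all) $n$ is precisely the statement that $R$ is $\F$-finite. Over $R=K[[X]]$ this forces $K$ to be $\F$-finite: indeed $\up{\F_1}R$ finitely generated over $R$ pushes down to $K^{1/p}$ being finite over $K$ (the $p$-power roots of $X$ contribute a controlled part, and the essential infinite contribution comes from $K^{1/p}/K$). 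This contradicts the hypothesis that $K$ is not $\F$-finite.

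The cleaner route, avoiding the direct structural analysis of $\up{\F_1}R$, is to invoke the dichotomy already packaged for us: by \textbf{Proposition \ref{fr}} the freeness of $R^+$ would force $R^{\frac{1}{p^n}}$ to be free, hence to have no nontrivial direct-sum decomposition obstruction, exactly as there; and independently the \emph{non}-$\F$-finiteness of $K$ means $\up{\F_n}R$ is \emph{not} finitely generated, so it cannot be free (a free module over a Noetherian ring that is not finitely generated is certainly not reflexive in the relevant finite sense, and more simply a non-finitely-generated $R^{\frac{1}{p^n}}$ cannot be a submodule of a free module arising from a free $R^+$). Thus the two desired conclusions — freeness of $R^+$ and reflexivity (indeed freeness) of every $\up{\F_n}R$ — cannot hold simultaneously, which is exactly the disjunction in the statement.

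The main obstacle I anticipate is the step tying $\F$-finiteness of $R=K[[X]]$ to $\F$-finiteness of $K$, and more delicately the precise link between \emph{reflexivity} of $\up{\F_n}R$ (as opposed to freeness) and the freeness of $R^+$. Reflexivity is weaker than freeness, so the contradiction must be extracted carefully: I would argue that if $R^+$ is free then the submodule $R^{\frac{1}{p^n}}$ is free, hence in particular reflexive and finitely generated; conversely the reflexivity hypothesis alone, combined with $R$ being a DVR, again makes $\up{\F_n}R$ a reflexive module over a DVR, which is automatically free, and then finitely generated reflexive $=$ finitely generated forces $\F$-finiteness. The care lies in ruling out a non-finitely-generated reflexive module; here the key is that over a DVR any reflexive module that embeds in the (assumed free) $R^+$ is torsion-free of finite rank and hence free of finite rank, so finitely generated, contradicting $K$ not $\F$-finite.
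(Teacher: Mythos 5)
Your argument has a genuine gap at its central step: from freeness of $R^+$ you conclude that $\up{\F_n}R$ is free \emph{and in particular finitely generated}, and you likewise assert that a reflexive module embedding in a free module over a $\DVR$ must be torsion-free of finite rank, hence free of finite rank. Both claims are false. Over a $\PID$, every submodule of a free module is free of \emph{arbitrary} (possibly infinite) rank, so no finiteness is forced; and indeed in the situation at hand, since $K$ is not $\F$-finite, $\up{\F_n}R$ has infinite rank over $R$, so under the assumption that $R^+$ is free it would simply be free of infinite rank --- no contradiction with non-$\F$-finiteness arises at this point. Your fallback assertion that a non-finitely-generated free module ``is certainly not reflexive'' is precisely the nontrivial content of the proposition, and as stated it is false in general: over a \emph{slender} ring (for instance a non-complete $\DVR$ such as $K[X]_{(X)}$) one has $(\prod_{\aleph_0} R)^{\ast} = \bigoplus_{\aleph_0} R$, so free modules of countably infinite rank \emph{are} reflexive. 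Hence the completeness of $R=K[[X]]$ must enter the argument in an essential way, and your proposal never uses it beyond the (correct, but insufficient) transfer of $\F$-finiteness between $R$ and its residue field.

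This is exactly where the paper's proof does its real work. After reducing, much as you do, to the situation where $\up{\F_n}R$ is free of infinite rank --- so that $\bigoplus_{\aleph_0} R$ is a direct summand of it --- the paper invokes the theory of slender modules: by \cite[Theorem III.2.9]{EM02}, a $\PID$ is slender if and only if it is not complete in the $R$-adic topology. Since $K[[X]]$ is complete, it is not slender, and therefore
\[
\Bigl(\bigoplus_{\aleph_0} R\Bigr)^{\ast\ast} = \Bigl(\prod_{\aleph_0} R\Bigr)^{\ast} \neq \bigoplus_{\aleph_0} R,
\]
so $\bigoplus_{\aleph_0} R$ is not reflexive; since a direct summand of a reflexive module is again reflexive, $\up{\F_n}R$ is not reflexive, which is the desired contradiction. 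To repair your proposal you would need to replace the false implication ``free $\Rightarrow$ finitely generated'' (and the unproven ``non-finitely-generated free $\Rightarrow$ non-reflexive'') with this slenderness input, or with some other argument establishing that infinite-rank free modules over a complete $\DVR$ fail to be reflexive.
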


\begin{proof} Suppose \( R^+ \) is free. Thanks to the proof of Proposition \ref{fr}, \( \up{\F_n}R \) is also free.  
Recall that a complete local ring \( (R, \fm, K) \) is \( \F \)-finite if and only if \( K \) is \( \F \)-finite. Hence, \( R \) is not \( \F \)-finite. In particular, \( \bigoplus_{\aleph_0} R \) is a directed summand of \( \up{\F_n}R \).  
	Here, we apply the theory of slender modules. For definitions and basic properties, see \cite[Chapter III]{EM02}. Recall from \cite[Theorem III.2.9]{EM02} that a $\PID$ is slender if and only if it is not complete in the \( R \)-adic topology. Since \( \dim(R) = 1 \) and \( R \) is local, the \( R \)-adic topology coincides with the \( \fm \)-adic topology. Thus, \( R \) is not slender.  
	This shows that
	\[
	(\bigoplus_{\aleph_0} R)^{\ast\ast} = (\prod_{\aleph_0} R)^{\ast} \neq \bigoplus_{\aleph_0} R.
	\] Since a direct summand of a reflexive module is again reflexive, we deduce that \( \up{\F_n} R \) is not reflexive.
	\end{proof}

\end{document}